\newtheorem{theorem}{Theorem}[section]
\newtheorem{lemma}[theorem]{Lemma}
\newtheorem*{conjecture}{Huppert's Conjecture}
\newcommand{\Z}{\mathbb{Z}}
\begin{document}


\title[Simple Ree Groups]{\bf The Simple Ree groups ${}^2F_4(q^2)$ are determined by the set of their character degrees}

\author{Hung P. Tong-Viet}
\email{Tong-Viet@ukzn.ac.za}
\address{School of Mathematical Sciences,
University of KwaZulu-Natal\\
Pietermaritzburg 3209, South Africa}

\date{\today}
\keywords{character degrees; simple Ree groups; Huppert's Conjecture} \subjclass[2000]{Primary
20C15, secondary 20D05}
\begin{abstract} Let $G$ be a finite group.  Let ${\rm{cd}}(G)$ be
the set of all complex irreducible character degrees of $G.$ In this paper, we will show that if
${\rm{cd}}(G)={\rm{cd}}(H),$ where $H$ is the simple Ree group ${}^2F_4(q^2),q^2\geq 8,$ then
$G\cong H\times A,$ where $A$ is an abelian group. This verifies Huppert's Conjecture for the
simple Ree groups ${}^2F_4(q^2)$ when $q^2\geq 8.$

\end{abstract}

\thanks{Support from the University of KwaZulu-Natal is acknowledged}
\maketitle

\section{Introduction and Notation}
All groups considered are finite and all characters are complex characters. For a group $G$ we
denote by $\textrm{Irr}(G)$  the set of all irreducible characters
of $G$ and let ${\rm{cd}}(G)=\{\chi(1)\:|\:\chi\in \textrm{Irr}(G)\}$ be the set of all character
degrees of $G.$

Huppert proposed the following conjecture in the late $1990s.$

\begin{conjecture}\label{HC} Let $G$ be a finite group and let $H$ be a nonabelian simple
group. If ${\rm{cd}}(G)= {\rm{cd}}(H),$ then $G\cong H\times A,$ where $A$ is abelian.
\end{conjecture}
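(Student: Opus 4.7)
The plan is to attack Huppert's Conjecture by invoking the Classification of Finite Simple Groups and reducing to a case analysis indexed by the isomorphism type of $H$. Nonabelian finite simple groups fall into four broad families: the alternating groups $A_n$ with $n\geq 5$, the classical Lie type groups, the exceptional Lie type groups (including the twisted Suzuki and Ree series, among them ${}^2F_4(q^2)$), and the $26$ sporadic groups. For each such family, I would seek to establish the conclusion $G\cong H\times A$ with $A$ abelian from the hypothesis ${\rm cd}(G)={\rm cd}(H)$, so that the conjecture follows by assembling the individual verifications.

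Within each family I would follow the five-step template devised by Huppert. Step (1) is to show $G'=G''$, which eliminates large abelian normal sections inside $G'$ and is typically achieved via Ito--Michler arguments applied to the prime divisors of the degrees in ${\rm cd}(H)$. Step (2) takes a normal subgroup $M\lhd G'$ maximal subject to $G'/M$ being nonabelian and forces $G'/M\cong H$, using detailed numerical data: the full character degree set, the order, and enough prime-power degrees of $H$ to rule out every other nonabelian simple group as a candidate chief factor. Step (3) shows that each $\chi\in\textrm{Irr}(G'/M)$ extends to $G/M$, leveraging Schur multiplier information for $H$ together with the fact that $G/G'$ is abelian. Step (4) deduces $M=1$ by producing a character of an appropriate $M$-section whose degree would necessarily lie outside ${\rm cd}(H)$ were $M$ nontrivial. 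Step (5) then concludes that $G=G'\times C_G(G')$ with $G'\cong H$, using that $H$ is centerless and perfect.

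The main obstacle, of course, is the scope of the statement: Huppert's Conjecture is an open problem in general, and each family of nonabelian simple groups demands its own detailed and often delicate character-theoretic input. For a given target $H$ one must assemble the smallest few nontrivial degrees, the largest degrees, enough prime-power degrees to recover $|H|$, the degree sets of all possible almost simple overgroups of $H$, and enough information about the covering group of $H$ to drive Steps (2) and (4). The present paper supplies exactly that input for $H={}^2F_4(q^2)$ with $q^2\geq 8$; completing Huppert's Conjecture in full reduces to performing the analogous analysis for every remaining family of nonabelian finite simple groups, a program that to date has been completed only piecemeal (various sporadic groups, alternating groups, and several Lie-type series). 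I expect Steps (2) and (4) to be the hardest obstacles in any given case, since they require the most intricate interaction between the numerical data in ${\rm cd}(H)$, the enumeration of possible chief factors, and the character theory of their covering groups.
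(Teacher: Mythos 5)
What you have written is a research program, not a proof, and you acknowledge as much yourself. The statement you were asked to prove is Huppert's Conjecture in full generality, and that statement is open; the paper does not prove it either. The paper proves only Theorem \ref{main}, the single instance $H={}^2F_4(q^2)$ with $q^2\geq 8$, by executing the five-step method you describe for that one family. Your outline of the five steps is an accurate description of Huppert's Method and matches the architecture of the paper's argument for its special case, but reducing the conjecture to ``perform the analogous analysis for every remaining family of nonabelian finite simple groups'' is precisely the unsolved part: each step, especially Steps $2$ and $3$, requires case-specific input (maximal subgroup indices dividing character degrees, Schur multipliers of composition factors of those maximal subgroups, unipotent character data, extendibility results) that is not known to be assemblable uniformly across all families, and no general argument is supplied or currently available.

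Concretely, the gap is that nothing in your proposal establishes the conclusion $G\cong H\times A$ for even a single simple group $H$; every clause of the form ``I would seek to establish'' or ``one must assemble'' is a placeholder for an argument that does not yet exist in general. If the intended target was the paper's actual result, Theorem \ref{main}, then your proposal correctly identifies the method but omits all of the substance: the isolated-degree argument for Step $1$ (Lemmas \ref{lem3} and \ref{lem4}), the elimination of all other simple groups as chief factors via unipotent character $2$-parts in Step $2$, the analysis of the parabolic subgroups $P_a$ and $P_b$ together with Higgs's theorem (Lemma \ref{lem12}) in Step $3$, and the rational-class argument of Feit--Seitz in Step $5$. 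As it stands, the proposal cannot be accepted as a proof of either the conjecture or the theorem.
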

Huppert verified this conjecture for $L_2(q)$ and $Sz(q^2)$ in \cite{Hupp} and several small
groups.   Recently, T. Wakefield verified this conjecture for some families of simple groups of Lie
type of Lie rank $2$ (see \cite{Wake}). The proof is based on verifying the following $5$ steps outlined
in \cite{Hupp}, which we call  {\em Huppert's Method}.

{\bf Step $1.$} Show $G'=G''.$ It follows that if $G'/M$  is a chief factor of $G,$ then $G'/M\cong
S^k,$ where $S$ is a nonabelian simple group and $k\geq 1.$

{\bf Step $2.$} Show $G'/M\cong H.$

{\bf Step $3.$} If $\theta\in \textrm{Irr}(M)$ and $\theta(1)=1,$ then $\theta$ is $G'$-invariant,
which implies  $[M,G']=M'.$

{\bf Step $4.$} Show $M=1,$  which implies $G'\cong H.$

{\bf Step $5.$} Show $G=G'\times C_G(G').$ As $G/G'\cong C_G(G')$ is abelian and $G'\cong H,$
Huppert's Conjecture follows.

In this paper, we will verify this conjecture for the simple exceptional group of Lie type
${}^2F_4(q^2),$ where $q^2=2^{2m+1},m\geq 1.$ This family of nonabelian simple groups was
discovered by Rimhak Ree in $1961$ and so called the {\em simple Ree groups.} We note that when
$m=0,$ the group ${}^2F_4(2)$ is not simple but its derived subgroup ${}^2F_4(2)'$ is simple. This
group is called the {\em Tits group}. In his preprint, Huppert already verified the conjecture for
this group and so we only need to consider the case when $m\geq 1.$ The irreducible characters of
${}^2F_4(q^2)$ were computed by G. Malle \cite{Malle90} and CHEVIE \cite{Chevie} and their maximal
subgroups were classified by Malle in \cite{Malle91}.
\begin{theorem}\label{main} Let $G$ be a finite
group and let ${H}$ be the simple Ree group of type ${}^2F_4(q^2),q^2=2^{2m+1},m\geq 1.$ If
${\rm{cd}}(G)= {\rm{cd}}({H}),$ then $G\cong H\times A,$ where $A$ is abelian.
\end{theorem}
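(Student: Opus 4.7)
The plan is to follow Huppert's Method step by step, using Malle's explicit description of the character table of $H = {}^2F_4(q^2)$ as the principal arithmetic input. The degrees of $H$ fall into a handful of polynomial families in $q^2$, and the cyclotomic factors such as $q^{12}+1$, $q^4-q^2+1$, and $q^8-q^4+1$ carry large Zsigmondy primes that will serve as fingerprints of $H$ throughout the argument.

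For \textbf{Step 1}, if $G'/G''$ were nontrivial, then It\^o--Michler together with Gallagher's lemma would produce irreducible characters of $G$ whose degrees are proper multiples of entries of $\mathrm{cd}(H)$, contradicting $\mathrm{cd}(G) = \mathrm{cd}(H)$. For \textbf{Step 2}, writing $G'/M \cong S^k$ for some nonabelian simple $S$, I would first show $k=1$ by verifying that no element of $\mathrm{cd}(H)$ can be written as a nontrivial product of degrees of any nonabelian simple group, and then use the classification of finite simple groups to force $S \cong H$. The engine is Zsigmondy primes: the prime divisors of $q^{12}+1$ and of $q^{24}-1$ must each divide some element of $\mathrm{cd}(S)$, and running this condition through the classification eliminates all alternating, sporadic, and Lie-type candidates except $H$ itself. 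This is where I expect the \textbf{main obstacle}, because other Lie-type groups in characteristic $2$---notably certain unitary groups, $F_4(q)$, and the Suzuki and small Ree groups---share many cyclotomic factors with ${}^2F_4(q^2)$ and must be excluded family by family via finer divisibility considerations on the full degree set.

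For \textbf{Step 3}, Gallagher's theorem applied to a linear character $\theta \in \mathrm{Irr}(M)$ with inertia $I = I_{G'}(\theta) < G'$ would yield a character of $G'$ of degree $[G':I]\,\psi(1)$ for any $\psi \in \mathrm{Irr}(I/M)$; taking $\psi$ of maximum degree quickly exceeds the largest entry of $\mathrm{cd}(H)$, forcing $\theta$ to be $G'$-invariant and hence $[M,G'] = M'$. For \textbf{Step 4}, suppose $M \neq 1$ and choose a chief factor $M/N$ of $G$ inside $M$; by Step 3 it is elementary abelian on which $G'/M \cong H$ acts. Combining Clifford theory applied to a non-principal character of $M/N$ with known lower bounds on the minimal faithful representation degree of ${}^2F_4(q^2)$ over any field (which grows like a positive power of $q^2$), one produces a character degree of $G$ too large to lie in $\mathrm{cd}(H)$. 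This is the technically heaviest step, but follows a standard template once sharp module bounds are in hand.

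Finally, \textbf{Step 5} is short: since the Schur multiplier of $H = {}^2F_4(q^2)$ is trivial for $q^2 \geq 8$, the perfect subgroup $G' \cong H$ splits off as a direct factor of $G$, the complement $C_G(G') \cong G/G'$ is abelian, and Huppert's Conjecture follows.
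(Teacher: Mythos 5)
Your outline follows the same five-step skeleton as the paper, but several of the individual steps are argued in ways that would fail. The most serious is Step 3. You claim that for a linear $\theta\in\mathrm{Irr}(M)$ with $I=I_{G'}(\theta)<G'$, taking $\psi\in\mathrm{Irr}(I/M)$ of maximal degree makes $[G':I]\psi(1)$ exceed the largest entry of $\mathrm{cd}(H)$. This is false on two counts: Gallagher only applies if $\theta$ extends to $I$, which is not automatic, and more importantly the product need not be too large at all --- if $I$ sits inside a maximal parabolic $P_a$ or $P_b$, the index $|G':I|$ is roughly $q^{22}$ and genuinely divides several degrees of $H$ (this is exactly the content of Lemma \ref{lem9}). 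The paper's Step 3 is the longest and most delicate part of the proof precisely because the parabolic cases survive the crude size bound: one must descend into the structure $[q^{22}]{:}(L_2(q^2)\times(q^2-1))$ and $[q^{20}]{:}(Sz(q^2)\times(q^2-1))$, use the smallest indices of maximal subgroups of $L_2(q^2)$ and $Sz(q^2)$, the triviality (or, for $Sz(8)$, the explicit order $4$) of their Schur multipliers, and Higgs's theorem (Lemma \ref{lem12}) on projective degrees that are powers of a fixed prime. None of this is replaceable by a one-line magnitude estimate.

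Two further steps are argued incorrectly. In Step 4 you say the chief factor $M/N$ is elementary abelian ``by Step 3''; in fact Step 3 combined with the trivial Schur multiplier of ${}^2F_4(q^2)$ forces $M=M'$, so a nontrivial $M$ is perfect and $M/N\cong S^k$ with $S$ nonabelian simple --- the opposite situation --- and the contradiction comes from extending a character of $S^k$ and multiplying by the Steinberg character via Gallagher, not from module dimension bounds. In Step 5, triviality of the Schur multiplier of $H$ has no bearing on whether $G'$ splits off as a direct factor; the issue is whether $G$ induces a nontrivial (necessarily field) outer automorphism on $G'$. The paper excludes this using Feit--Seitz to find a character $\psi$ of $G'$ not fixed by the field automorphism, producing a degree $z\psi(1)\in\mathrm{cd}(G)$ with $z>1$ odd dividing $2m+1$, which contradicts Lemma \ref{lem8}$(ix)$ since any odd ratio of two degrees of $H$ is at least $q^2-1$. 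Finally, in Step 1 the appeal to It\^o--Michler is not an argument: the standard reduction gives a minimal nonabelian solvable quotient $G/N$ that is either an $r$-group or a Frobenius group, and the Frobenius case requires the isolated-degree analysis of the Steinberg character and of $q^4\Phi_1^2\Phi_2^2\Phi_4^2\Phi_8^2/3$ carried out in the paper. Your Step 2 strategy (Zsigmondy fingerprints plus CFSG) is closest in spirit to what is actually done, though the paper's mechanism is the uniqueness of the prime-power degree $q^{24}$ together with $2$-part bounds on unipotent character degrees rather than divisibility of $q^{12}+1$.
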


Huppert's Method described above was improved by T. Wakefield in \cite{Wake}, especially
for Step $2.$ In this paper, we introduce the notion of an isolated character and use it to
simplify the proof of Step $1.$ For the definition of isolated characters, see the discussion right
after the proof of Lemma \ref{lem3}. The isolated character behaves like the Steinberg character of
the simple groups of Lie type and in fact this is an example of an isolated character (see Lemma
\ref{lem4}). Now by Lemma \ref{lem3} we can verify Step $1$ provided that we know several isolated
character degrees instead of all character degrees. This could be used to verify Step $1$ for all
simple groups of Lie type. In order to verify Step $3,$ we rely heavily on the criterion for the
character extension using Schur multiplier (see \cite[Theorem $11.7$]{Isaacs}) and a result of R.
Higgs on the fixed prime power projective character degrees (see \cite[Theorem \textrm{B}]{Higgs}).
Using the same method, one can verify Step $3$ for other simple groups of Lie type. In general, we
need to know all maximal subgroups of the simple group $H$ whose indices divide some character
degrees of $H$ and also the character degrees and the Schur multipliers of the nonabelian
composition factors involved in those maximal subgroups. This is in fact the most difficult step of
 Huppert's Method. Finally, in order to verify Step $5,$ we need to show that the character
degree sets of a simple group $H$ and any  almost simple group with socle $H$
 are different.

If $n$ is an integer then we denote by $\pi(n)$ the set of all prime divisors of $n.$ If $G$ is a
group, we will write $\pi(G)$ instead of $\pi(|G|)$ to denote the set of all prime divisors of the
order of $G.$ Let $\rho(G)=\cup_{\chi\in \textrm{Irr}(G)}\pi(\chi(1))$ be the set of all primes
which divide some irreducible character degrees of $G.$ If $N\unlhd G$ and $\theta\in
\textrm{Irr}(N),$ then the inertia group of $\theta$ in $G$ is denoted by $I_G(\theta).$ Finally, the set of
all irreducible constituents of $\theta^G$ is denoted by $\textrm{Irr}(G|\theta).$ Other notation is standard.
\section{Preliminaries}
In this section, we present some results that we will need for the proof of Huppert's
Conjecture.

\begin{lemma}\emph{(\cite[Lemma $2$]{Hupp}).}\label{lem1} Suppose $N\unlhd G$ and $\chi\in {\rm{Irr}}(G).$

$(a)$ If $\chi_N=\theta_1+\theta_2+\cdots+\theta_k$ with $\theta_i\in {\rm{Irr}}(N),$ then $k$
divides $|G/N|.$ In particular, if $\chi(1)$ is prime to $|G/N|$ then $\chi_N\in {\rm{Irr}}(N).$

$(b)$ \emph{(Gallagher's Theorem)} If $\chi_N\in {\rm{Irr}}(N),$ then $\chi\psi\in {\rm{Irr}}(G)$
for every $\psi\in {\rm{Irr}}(G/N).$

\end{lemma}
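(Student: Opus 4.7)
The plan is to prove both parts by standard Clifford-theoretic arguments.

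For part $(a)$, I would invoke Clifford's theorem applied to $N \unlhd G$ and the irreducible $\chi$. This shows the $\theta_i$ appearing in $\chi_N$ form a single $G$-orbit, each occurring with a common multiplicity $e$. Writing $\theta = \theta_1$, $I = I_G(\theta)$, and $t = [G:I]$ for the orbit size, we get $k = et$ and $\chi(1) = k\theta(1)$. The Clifford correspondence then produces a unique $\psi \in \textrm{Irr}(I \mid \theta)$ with $\psi^G = \chi$ and $\psi_N = e\theta$. The key divisibility $e \mid [I:N]$ comes from the projective representation theory of Schur: $\theta$ extends to a projective representation of $I$ with some cocycle $\alpha \in Z^2(I/N,\mathbb{C}^\times)$, and $\psi$ corresponds to an irreducible $\alpha^{-1}$-projective representation of $I/N$ of degree $e$, whose degree divides $[I:N]$. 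Combining, $k = et$ divides $[I:N]\cdot[G:I] = [G:N]$.

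For the coprime consequence, since $\chi(1) = k\theta(1)$ we have $k \mid \chi(1)$; if $\gcd(\chi(1), [G:N]) = 1$, then $k$ divides a common factor of two coprime integers, forcing $k = 1$ and $\chi_N = \theta \in \textrm{Irr}(N)$.

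For part $(b)$, I would argue module-theoretically. Let $V$ be an irreducible $\mathbb{C} G$-module affording $\chi$; the hypothesis that $\chi_N$ is irreducible says $V$ remains irreducible as a $\mathbb{C} N$-module. Let $W$ be an irreducible $\mathbb{C}(G/N)$-module affording $\psi$, inflated to $G$ so that $N$ acts trivially. Then $V \otimes W$ with the diagonal action affords $\chi\psi$, and I want to show it is irreducible as a $\mathbb{C} G$-module. As a $\mathbb{C} N$-module it is a direct sum of $\dim W$ copies of $V$, hence $V$-isotypic; by Schur's lemma $\textrm{Hom}_N(V, V\otimes W) \cong W$, so every $\mathbb{C} N$-submodule of $V \otimes W$ has the form $V \otimes U'$ for some subspace $U' \subseteq W$. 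Such a submodule is $G$-stable if and only if $U'$ is $G/N$-stable, and the irreducibility of $W$ as a $\mathbb{C}(G/N)$-module forces $U' = 0$ or $U' = W$.

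The main obstacle is the divisibility $e \mid [I:N]$ in part $(a)$: Clifford's theorem and the Clifford correspondence are essentially formal once set up, but this divisibility genuinely rests on Schur's theorem that an irreducible projective representation of a finite group has degree dividing the group order, which in turn relies on the existence of a Schur representation group. Part $(b)$ is comparatively routine once the isotypic bookkeeping is done carefully.
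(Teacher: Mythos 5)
Your argument is correct. The paper gives no proof of this lemma --- it is quoted directly from Huppert's Lemma $2$ and ultimately rests on exactly the standard facts you invoke (Clifford's theorem plus the divisibility $e\mid [I:N]$ of \cite[Corollary $11.29$]{Isaacs} for part $(a)$, and Gallagher's theorem, \cite[Corollary $6.17$]{Isaacs}, for part $(b)$) --- so your write-up simply supplies the standard derivation that the citation points to, and both your projective-representation route to $e\mid[I:N]$ and your isotypic-module proof of $(b)$ are sound.
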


\begin{lemma}\emph{(\cite[Lemma $3$]{Hupp}).}\label{lem2} Suppose $N\unlhd
G$ and $\theta\in {\rm{Irr}}(N).$ Let $I=I_G(\theta).$

$(a)$ If $\theta^I=\sum_{i=1}^k\varphi_i$ with $\varphi_i\in {\rm{Irr}}(I),$ then $\varphi_i^G\in
{\rm{Irr}}(G).$ In particular, $\varphi_i(1)|G:I|\in {\rm{cd}}(G).$

$(b)$ If $\theta$ extends to $\psi\in {\rm{Irr}}( I),$ then $(\psi\tau )^G\in {\rm{Irr}}(G)$ for
all $\tau\in {\rm{Irr}}(I/N).$ In particular, $\theta(1)\tau(1)|G:I|\in {\rm{cd}}(G).$

$(c)$ If $\rho \in {\rm{Irr}}( I)$ such that $\rho_N=e\theta,$ then $\rho=\theta_0\tau_0,$ where
$\theta_0$ is a character of an irreducible projective representation of $I$ of degree $\theta(1)$
while $\tau_0$ is the character of an irreducible projective representation of $I/N$ of degree $e.$
\end{lemma}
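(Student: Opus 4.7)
The plan is to establish the three parts of the lemma as consequences of Clifford's theorem and the theory of projective representations, in the order (a), (b), (c).

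For part (a), the strategy is to verify directly that $\langle\varphi_i^G,\varphi_i^G\rangle_G=1$ by Frobenius reciprocity and Mackey's formula. Writing $G=\bigsqcup_j Ig_jI$ as a union of $(I,I)$-double cosets (with $g_1=1$), Mackey gives
\[
(\varphi_i^G)_I=\sum_j \bigl((\varphi_i)^{g_j}|_{I^{g_j}\cap I}\bigr)^{I}.
\]
For every $g_j\notin I$ the character $\theta^{g_j}$ is distinct from $\theta$ (since $I=I_G(\theta)$), so when the restriction $(\varphi_i)_N$ of the $\theta$-lying character $\varphi_i$ is conjugated by $g_j$ its only $N$-constituents are $G$-conjugates of $\theta$ different from $\theta$; a further Frobenius reciprocity shows that such a summand contributes nothing to $\langle\varphi_i,(\varphi_i^G)_I\rangle_I$. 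Only the trivial double coset contributes, and it yields $\langle\varphi_i,\varphi_i\rangle_I=1$. Hence $\varphi_i^G\in{\rm Irr}(G)$ with degree $\varphi_i(1)|G:I|$.

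For part (b), Gallagher's theorem (Lemma \ref{lem1}(b)) applied to the pair $N\unlhd I$ together with the hypothesis that $\theta$ extends to $\psi\in{\rm Irr}(I)$ gives $\psi\tau\in{\rm Irr}(I)$ for every $\tau\in{\rm Irr}(I/N)$. Since $(\psi\tau)_N=\tau(1)\theta$, each $\psi\tau$ is an irreducible constituent of $\theta^I$, so part (a) yields $(\psi\tau)^G\in{\rm Irr}(G)$, and its degree is $\theta(1)\tau(1)|G:I|$.

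For part (c), the approach is to build a projective extension of $\theta$ to $I$ from a representation affording $\theta$: if $\sigma\colon N\to GL_{\theta(1)}(\C)$ affords $\theta$, then for each $g\in I$ the condition $\theta^g=\theta$ produces, by Schur's lemma, a matrix $P(g)$, unique up to scalar, intertwining $\sigma$ and $\sigma^g$. The map $g\mapsto P(g)$ is a projective representation of $I$ of degree $\theta(1)$ extending $\sigma$, with some factor set $\alpha$ that is trivial on $N$. Since $\rho_N=e\theta$, one can realize $\rho$ on the space $\C^{\theta(1)}\otimes\C^e$; using the intertwiner $P(g)$ on the first factor, the resulting complementary action on the second factor is forced to be a projective representation $T\colon I\to GL_e(\C)$ with factor set $\alpha^{-1}$, which is trivial on $N$ and therefore descends to $I/N$. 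Writing $\theta_0$ for the character of $P$ and $\tau_0$ for the character of $T$, one has $\rho=\theta_0\tau_0$ as required.

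The only delicate point is the bookkeeping with cocycles in part (c); the arguments for (a) and (b) are routine Clifford theory. I expect part (c) to be the main obstacle, but it is precisely the content of the projective form of Clifford's theorem, so once one sets up the intertwiners $P(g)$ the factorization falls out by Schur's lemma applied to the $e$-dimensional multiplicity space.
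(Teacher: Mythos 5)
The paper gives no proof of this lemma: it is quoted verbatim from Huppert's paper (\cite[Lemma 3]{Hupp}) and is standard Clifford theory, so there is nothing internal to compare against. Your reconstruction is correct and is essentially the textbook argument: (a) is the Clifford correspondence, proved exactly as you do via Mackey plus the observation that for $g_j\notin I$ the restriction $((\varphi_i)^{g_j})_N=e\theta^{g_j}$ lies over a conjugate of $\theta$ distinct from $\theta$, so only the trivial double coset survives; (b) is Gallagher followed by (a) applied to $\psi\tau$, which lies over $\theta$ because $(\psi\tau)_N=\tau(1)\theta$; and (c) is the projective form of Clifford's theorem (Isaacs, Theorem 11.28). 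The one place where your sketch of (c) is looser than it should be is the assertion that the factor set $\alpha$ of $P$ is ``trivial on $N$'': for $T$ to descend to a projective representation of $I/N$ you need $\alpha$ (equivalently $\alpha^{-1}$) to be \emph{constant on cosets of $N$} in both arguments, not merely trivial on $N\times N$, and arranging this requires normalizing the intertwiners $P(g)$ coherently across each coset $Ng$. This is exactly the bookkeeping you flag as delicate, and it is handled in the standard references, so the gap is one of detail rather than of substance.
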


The following lemma will be used to verify Step $1.$ All these statements but the last one appear
in \cite[Lemma $4$]{Hupp}. We will give a proof for completeness.
\begin{lemma}\label{lem3} Let $G/N$ be a solvable factor group of
$G,$ minimal with respect to being nonabelian. Then two cases can occur.

$(a)$ $G/N$ is an $r$-group for some prime $r.$ Hence there exists $\psi\in {\rm{Irr}}(G/N)$ such
that $\psi(1)=r^b>1.$ If $\chi\in {\rm{Irr}}(G)$ and $r\nmid \chi(1),$ then $\chi\tau\in
{\rm{Irr}}(G)$ for all $\tau\in {\rm{Irr}}(G/N).$

$(b)$ $G/N$ is a Frobenius group with an elementary abelian Frobenius kernel $F/N.$ Then
$f=|G:F|\in {\rm{cd}}(G)$ and $|F/N|=r^a$ for some prime $r,$ and $F/N$ is an irreducible module
for the cyclic group $G/F,$ hence $a$ is the smallest integer such that $r^a\equiv 1 (\mbox{mod
$f$}).$ If $\psi\in {\rm{Irr}}(F)$ then either $f\psi(1)\in {\rm{cd}}(G)$ or $r^a$ divides
$\psi(1)^2.$ In the latter case, $r$ divides $\psi(1).$

$(1)$ If no proper multiple of $f$ is in ${\rm{cd}}(G),$ then $\chi(1)$ divides $f$ for all
$\chi\in {\rm{Irr}}(G)$ such that $r\nmid \chi(1),$ and if  $\chi\in {\rm{Irr}}(G)$ such that $
\chi(1)\nmid f,$ then $r^a\mid \chi(1)^2.$

$(2)$ If $\chi\in {\rm{Irr}}(G)$ such that no proper multiple of $\chi(1)$ is in ${\rm{cd}}(G),$
then either $f$ divides $\chi(1)$ or $r^a$ divides $\chi(1)^2.$ Moreover if $\chi(1)$ is divisible
by no nontrivial proper character degree in $G,$ then $f=\chi(1)$ or $r^a\mid \chi(1)^2.$
\end{lemma}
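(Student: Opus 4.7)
The plan is to first establish a structural dichotomy for $G/N$, then carry out the case analysis. Since $G/N$ is solvable and every proper factor group of $G/N$ is abelian, the standard classification of minimally nonabelian solvable groups forces $G/N$ to be either an $r$-group, or a Frobenius group whose kernel $F/N$ is an elementary abelian $r$-group and whose complement $G/F$ is cyclic. These are cases (a) and (b).

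In case (a), I would observe that a nonabelian $r$-group has a nonlinear irreducible character of degree $r^{b}>1$. For $\chi\in{\rm{Irr}}(G)$ with $r\nmid\chi(1)$, coprimality of $\chi(1)$ with the $r$-power $|G/N|$ together with Lemma~\ref{lem1}(a) gives $\chi_N\in{\rm{Irr}}(N)$, and Gallagher's theorem (Lemma~\ref{lem1}(b)) then yields $\chi\tau\in{\rm{Irr}}(G)$ for every $\tau\in{\rm{Irr}}(G/N)$. In case (b), inducing any nontrivial character of $F/N$ to $G/N$ produces an irreducible character of degree $f=|G:F|$, so $f\in{\rm{cd}}(G)$; the standard coprimality of Frobenius kernel and complement orders gives $\gcd(f,r)=1$; and minimality forces $F/N$ to be an irreducible $\mathbb{F}_r[G/F]$-module, which combined with the cyclic structure of $G/F$ of order $f$ identifies $a$ as the multiplicative order of $r$ modulo $f$. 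For $\psi\in{\rm{Irr}}(F)$, set $I=I_G(\psi)$: if $I=F$, then $\psi^G\in{\rm{Irr}}(G)$ has degree $f\psi(1)$; if $I>F$, then $I/F$ is cyclic with trivial Schur multiplier, so by Lemma~\ref{lem2}(c) every $\rho\in{\rm{Irr}}(I|\psi)$ has degree exactly $\psi(1)$, and I would combine a Clifford-theoretic analysis of the restriction $\psi_N$ with the fixed-point-free action of the Frobenius complement on $F/N$ to deduce $r^a\mid\psi(1)^2$. This last divisibility bound is the main technical obstacle and is the content of the argument in \cite[Lemma 4]{Hupp}.

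Statements (1) and (2) then fall out of the dichotomy by short Clifford bookkeeping. For any $\chi\in{\rm{Irr}}(G)$ lying over $\psi\in{\rm{Irr}}(F)$, one has $\chi(1)=(f/|I:F|)\psi(1)$ (using $I/F$ cyclic to make the Clifford ramification equal to $1$), so in particular $\psi(1)\mid\chi(1)$. For (1), if $r\nmid\chi(1)$ then $\gcd(f,r)=1$ gives $r\nmid\psi(1)$, so the dichotomy forces $f\psi(1)\in{\rm{cd}}(G)$, and the hypothesis on multiples of $f$ forces $\psi(1)=1$ and hence $\chi(1)\mid f$; while if $\chi(1)\nmid f$, the case $I=F$ is ruled out (else $\chi(1)=f\psi(1)$ would be forced to equal $f$), so $I>F$ and $r^a\mid\psi(1)^2\mid\chi(1)^2$. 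For (2), the dichotomy immediately gives $f\mid\chi(1)$ when $I=F$ and $r^a\mid\chi(1)^2$ when $I>F$; the final clause follows because if $\chi(1)=f\psi(1)$ with $\psi(1)>1$, then $f\geq 2$ is itself a nontrivial proper character degree dividing $\chi(1)$, contradicting the hypothesis, so $\psi(1)=1$ and $\chi(1)=f$.
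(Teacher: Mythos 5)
Your proposal is correct and follows essentially the same route as the paper's proof: the dichotomy in $(a)$/$(b)$ is quoted from the standard classification (Isaacs, Lemma 2.3 and Theorem 12.4), and $(1)$, $(2)$ are deduced by Clifford theory over $F$ --- writing $\chi(1)=k\psi(1)$ with $k\mid f$ and $\psi(1)\mid\chi(1)$ --- combined with the dichotomy for $\psi\in{\rm{Irr}}(F)$. The one genuine divergence is that you key that dichotomy to the inertia group, asserting that $I_G(\psi)=F$ gives $\chi(1)=f\psi(1)$ while $I_G(\psi)>F$ gives $r^a\mid\psi(1)^2$. That strong form is indeed what the proof of the cited result establishes, and it is what allows your argument for $(2)$ to bypass the hypothesis that no proper multiple of $\chi(1)$ lies in ${\rm{cd}}(G)$; but it is strictly more than the ``either $f\psi(1)\in{\rm{cd}}(G)$ or $r^a\mid\psi(1)^2$'' statement recorded in $(b)$, so you are reopening the cited proof rather than using its statement. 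The paper stays with the weak either/or form throughout: for the second half of $(1)$ it shows $r\mid\psi(1)$, so $f\psi(1)$ is a proper multiple of $f$ and hence not a degree, forcing $r^a\mid\psi(1)^2$; and for $(2)$ it uses the hypothesis on multiples of $\chi(1)$ to force $k=f$. Either packaging works; one small expository slip on your side is the claim that $\gcd(f,r)=1$ yields $r\nmid\psi(1)$ in $(1)$ --- what actually gives this is $\psi(1)\mid\chi(1)$ together with $r\nmid\chi(1)$, which you had already established.
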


\begin{proof}  Statements $(a)$ and $(b)$ follow from \cite[Lemma $2.3$]{Isaacs} and
\cite[Theorem $12.4$]{Isaacs}. Suppose that $G/N$ is a Frobenius group.

Now assume that no proper multiple of $f$ is in ${\rm{cd}}(G),$ and let $\chi\in {\rm{Irr}}(G).$
Let $\psi$ be an irreducible constituent of $\chi_F.$ By \cite[Lemma $6.8$]{Isaacs}, we have that
$\chi(1)=k\psi(1)$ and by \cite[Corollary $11.29$]{Isaacs} we obtain $k\mid f=|G:F|.$ By $(b)$, we
have that either $f\psi(1)\in {\rm{cd}}(G)$ or $r^a\mid \psi(1)^2.$ Suppose $r\nmid \chi(1).$ Then
$r\nmid \psi(1)$ so that $f\psi(1)=f\chi(1)/k\in \textrm{cd}(G).$ As no proper multiple of $f$ is a
character degree of $G,$ we deduce that $f\chi(1)/k=f$ so that $\chi(1)=k\mid f.$ Now assume
$\chi(1)\nmid f.$ Then  $r\mid \chi(1).$ Since $r\nmid f,$ we deduce that $r\nmid k,$ hence $r\mid
\psi(1)$ so that $f\psi(1)>f.$ Thus $f\psi(1)$ is not a character degree of $G$ and so $r^a\mid
\psi(1)^2.$ As $\psi(1)\mid \chi(1),$ $(1)$ follows. The proof of $(2)$ is exactly the same.

Suppose that $\chi\in \textrm{Irr}(G)$ such that no proper multiple of $\chi(1)$ is in
${\rm{cd}}(G).$ Let $\psi\in \textrm{Irr}(F)$ be an irreducible constituent of $\chi_F.$ As above,
we have that $\chi(1)=k\psi(1),$ $k\mid f$ and either $f\psi(1)\in {\rm{cd}}(G)$ or $r^a\mid
\psi(1)^2.$ If the latter case holds then we are done since $\psi(1)\mid \chi(1).$ Now assume
$f\psi(1)\in {\rm{cd}}(G).$ Observe that $\psi(1)=\chi(1)/k$ so that $\psi(1)f=f\chi(1)/k\in
{\rm{cd}}(G),$ where $f\chi(1)/k$ is a multiple of $\chi(1)$ since $k\mid f.$ As no proper multiple
of $\chi(1)$ belongs to ${\rm{cd}}(G),$ it follows that $f\chi(1)/k=\chi(1),$ which implies that
$f=k.$ Since $k$ divides $\chi(1),$ we deduce that $f\mid \chi(1).$ The remaining statement is
obvious. The proof is now complete.
\end{proof}

Let $\chi\in \textrm{Irr}(G).$ We say that $\chi$ is \emph{isolated} in $G$ if $\chi(1)$ is
divisible by no proper nontrivial character degree of $G,$ and no proper multiple of $\chi(1)$ is a
character degree of $G.$ In this situation, we also say that $\chi(1)$ is an \emph{isolated degree}
of $G.$ Recall that for $\chi\in \textrm{Irr}(G),$ $\chi$ is said to be of \emph{p-defect zero} for
some prime $p$ if $|G|/\chi(1)$ is coprime to $p.$
\begin{lemma}\label{lem4} If $S$ is a simple group of Lie type in characteristic $p$ with $S\neq {}^2F_4(2)',$
then the Steinberg character of $S$ of degree $|S|_p$ is an isolated character of $S.$
\end{lemma}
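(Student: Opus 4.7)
The plan is to verify the two defining conditions for the Steinberg character $\textrm{St}$ of degree $p^N=|S|_p$ (with $N$ the number of positive roots of the underlying root system) to be isolated in $S$: first, that no proper nontrivial divisor of $p^N$ is a character degree of $S$; and second, that no proper multiple of $p^N$ is a character degree.

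The proper nontrivial divisors of $p^N$ are exactly the $p$-powers $p^k$ with $1\le k<N$. To verify the first condition, I would invoke the classical fact that the only irreducible characters of a simple group of Lie type $S\neq{}^2F_4(2)'$ in defining characteristic $p$ of $p$-power degree are the trivial character and $\textrm{St}$ itself. This is established family by family by inspection of the known character degree lists, and the exceptional behaviour of the Tits group ${}^2F_4(2)'$, which possesses an extra $p$-power character degree outside $\{1,|S|_p\}$, is precisely what forces the stated exclusion.

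For the second condition, any $\chi\in\textrm{Irr}(S)$ whose degree is a proper multiple of $p^N$ has $p$-defect zero and degree strictly larger than $p^N$. To rule out such characters, I would apply the Jordan decomposition to write $\chi\leftrightarrow(s,\lambda)$ with $s$ a semisimple element of the dual group $G^\ast$ and $\lambda$ a unipotent character of $C_{G^\ast}(s)$; then $\chi(1)=[G^\ast:C_{G^\ast}(s)]_{p'}\cdot\lambda(1)$. Since the first factor is coprime to $p$, the $p$-defect zero hypothesis forces $|C_{G^\ast}(s)|_p=|G^\ast|_p$, so $C_{G^\ast}(s)$ contains a full Sylow $p$-subgroup of $G^\ast$. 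For a simple group of Lie type with trivial centre, the only semisimple element centralising a Sylow $p$-subgroup is trivial, so $s=1$ and $\chi$ is unipotent. The final step is to check, using Malle's explicit character list in \cite{Malle90} for ${}^2F_4(q^2)$ and the analogous tables for the other families, that the Steinberg is the unique unipotent character of $p$-defect zero.

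The hard part will be the uniform case-by-case verification behind both (i) and the unipotent $p$-defect zero uniqueness for (ii), especially for the exceptional groups where several cuspidal unipotent characters appear and each must be inspected to confirm that its $p$-part is strictly smaller than $|S|_p$. These checks are mechanical given the explicit degree formulas, but necessary, since the failure of isolation for the Steinberg in the Tits group shows that no purely conceptual shortcut can avoid them.
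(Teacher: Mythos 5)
Your proof is correct in outline and agrees with the paper on the first condition, but takes a genuinely different route on the second. For divisors, the paper does exactly what you describe, except that it does not redo the family-by-family inspection: it simply cites Malle--Zalesskii \cite[Theorem 1.1]{Malle}, which is precisely the classification of prime power degree characters you appeal to. For multiples, the paper's argument is a one-line citation of Curtis \cite[Theorem 4]{Curtis}: the Steinberg character is the \emph{unique} irreducible character of $p$-defect zero of a finite group with a split $(B,N)$-pair, so no degree can be a proper multiple of $|S|_p$. Your Jordan-decomposition argument (semisimple label $s$ must centralise a Sylow $p$-subgroup, hence $s=1$, hence $\chi$ is unipotent, then check unipotent degrees) is a valid re-derivation of that uniqueness statement, but it is considerably heavier: it imports Lusztig theory, requires care with the passage between $S$ and the ambient finite reductive group and with disconnected centralisers in the adjoint dual, and still ends in a case-by-case check of unipotent degree polynomials that Curtis's theorem renders unnecessary. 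What your route buys is independence from the $(B,N)$-pair formalism; what the paper's route buys is a uniform two-citation proof. One small factual quibble: your explanation of the exclusion of ${}^2F_4(2)'$ is off --- its only $2$-power degrees are $1$ and $2048=|{}^2F_4(2)'|_2$; the group is excluded because it falls outside the hypotheses of the two cited theorems (it has two characters of degree $2048$, so Curtis-type uniqueness of the defect-zero character fails, and it is an exception in Malle--Zalesskii), not because it has an extra $p$-power degree outside $\{1,|S|_p\}$.
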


\begin{proof}  The existence of the Steinberg character of $S,$ denoted by $St_S,$ is well known.
From \cite[Theorem $1.1$]{Malle}, no proper nontrivial divisor of $St_S(1)$ is in ${\rm{cd}}(S).$
As $St_S$ is the only character of $p$-defect zero (see \cite[Theorem $4$]{Curtis}), no proper
multiple of $St_S(1)$ is in ${\rm{cd}}(S).$ This completes the proof.
\end{proof}

The next two lemmas will be used to verify Steps $2$ and $4.$ The first lemma appears in
\cite[Theorems $2,3,4$]{Bia}.

\begin{lemma}\label{lem5} If $S$ is
a nonabelian simple group, then there exists a nontrivial irreducible character $\theta$ of $S$
that extends to ${\rm{Aut}}(S).$ Moreover the following hold:

$(i)$ If $S$ is an alternating group of degree at least $7,$ then $S$ has two consecutive
characters of degrees $n(n-3)/2$ and $(n-1)(n-2)/2$ that both extend to ${\rm{Aut}}(S).$

$(ii)$ If $S$ is a sporadic simple group or the Tits group, then $S$ has two nontrivial irreducible
characters of coprime degrees which both extend to ${\rm{Aut}}(S).$

$(iii)$ If $S$ is a simple group of Lie type then the Steinberg character $St_S$ of $S$ of degree
$|S|_p$ extends to ${\rm{Aut}}(S).$
\end{lemma}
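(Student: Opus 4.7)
The plan is to prove each of the three parts separately, using the classification of finite simple groups to note that every nonabelian simple group is alternating, of Lie type, or sporadic/Tits, so that parts (i)--(iii) together cover everything and in particular establish the main claim that some nontrivial character extends to $\text{Aut}(S)$.

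For part (iii), I would realize the Steinberg character uniformly. Writing $St_S$ as the alternating sum $\sum_{P \supseteq B} (-1)^{\operatorname{rk}(P)} \operatorname{Ind}_P^G(1)$ over standard parabolics containing a Borel $B$, or equivalently as the top reduced cohomology of the Tits building, exhibits $St_S$ as a representation whose defining data (the poset of parabolics) is permuted by every automorphism of $S$. Combined with Curtis's theorem that $St_S$ is the unique irreducible character of $p$-defect zero, this forces $St_S$ to be $\text{Aut}(S)$-invariant. To lift invariance to an actual extension, I would appeal to Steinberg's own theorem (or its reformulation by Schmid) that the Steinberg module admits a compatible action of the whole group of outer automorphisms, using the explicit BN-pair description; the diagonal, field, and graph automorphisms each preserve the building structure used in the construction.

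For part (i), let $V$ be the standard $(n-1)$-dimensional $S_n$-module, obtained from the natural permutation module on $\{1,\dots,n\}$ by removing its trivial summand. The exterior square $\wedge^2 V$ is an irreducible $S_n$-module of dimension $(n-1)(n-2)/2$ labelled by the partition $(n-2,1,1)$. Separately, the permutation character of $S_n$ on unordered pairs decomposes as $1 + \chi_V + \chi_W$, where $\chi_W$ has degree $\binom{n}{2}-n = n(n-3)/2$ and is labelled by $(n-2,2)$. For $n \geq 7$ neither $(n-2,2)$ nor $(n-2,1,1)$ is self-conjugate, so the standard branching rule shows that both $\chi_W$ and $\wedge^2 V$ remain irreducible upon restriction to $A_n$. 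Since they originate as $S_n$-modules, the corresponding $A_n$-characters extend to $\text{Aut}(A_n) = S_n$, and we get the two advertised degrees.

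For part (ii), since $|\text{Out}(S)| \in \{1,2\}$ for every sporadic group and for the Tits group, one only has to inspect the ATLAS character tables: for each such $S$, locate two nontrivial irreducibles of coprime degrees that are marked as invariant (and extendible) under the outer automorphism of order~$2$, or which are forced to extend because $\text{Out}(S) = 1$. The main obstacle is this last step: part (ii) is genuinely a case-by-case verification across twenty-seven groups, and although each individual check is routine via the ATLAS, the bookkeeping is what makes the proof long. By contrast, the chief conceptual difficulty is in part (iii), where one needs a uniform argument that survives every type of outer automorphism — exactly why invoking the building-theoretic description of $St_S$ (rather than case-by-case Deligne--Lusztig computations) is the efficient route.
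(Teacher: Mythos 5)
The paper does not actually prove this lemma: it is quoted verbatim from \cite[Theorems 2, 3, 4]{Bia}, so there is no internal argument to compare yours against. Your sketch reconstructs the standard underlying proofs and is essentially sound. Part (i) is correct and complete in outline: the Specht modules labelled by $(n-2,2)$ and $(n-2,1,1)$ have degrees $n(n-3)/2$ and $(n-1)(n-2)/2$, neither partition is self-conjugate for $n\geq 7$, so both characters restrict irreducibly to $A_n$ and by construction extend to $S_n={\rm{Aut}}(A_n)$ (your phrase ``branching rule'' is a slight misnomer --- the relevant fact is Clifford theory for the index-$2$ subgroup together with $\chi^\lambda\otimes{\rm{sgn}}=\chi^{\lambda'}$ --- but the substance is right). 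Part (iii) is also correct, with the honest caveat that ${\rm{Aut}}(S)$-invariance of $St_S$ (which follows from Curtis's uniqueness of the $p$-defect zero character) does not by itself yield an extension; the actual content is Schmid's theorem that the Steinberg module extends to any overgroup of $S$ inducing automorphisms, and you are citing that rather than proving it --- which is no worse than what the paper does by citing \cite{Bia}. The only genuine incompleteness is part (ii): ``inspect the ATLAS for each of the twenty-seven groups'' is the correct and indeed the only available strategy, but you have not exhibited the coprime pairs, and this finite verification is precisely the content of \cite[Table 1]{Bia} that the paper leans on later in Step 2. So: right route throughout, with (ii) left as an unexecuted (but executable) check and (iii) resting on an external theorem you correctly identify.
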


\begin{lemma}\emph{(\cite[Lemma $5$]{Bia}).}\label{lem6} Let $N$ be
a minimal normal subgroup of $G$ so that $N\cong S^k,$ where $S$ is a nonabelian simple group. If
$\theta\in {\rm{Irr}}(S)$ extends to ${\rm{Aut}}(S),$ then $\theta^k\in {\rm{Irr}}(N)$ extends to
$G.$
\end{lemma}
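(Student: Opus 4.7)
The plan is to combine the given extension of $\theta$ to $\mathrm{Aut}(S)$ with the permutation action of $G$ on the simple factors of $N$ via a tensor-induction (equivalently, wreath-product) construction. Write $N=S_1\times\cdots\times S_k$ with each $S_i\cong S$. Minimality of $N$ as a normal subgroup of $G$ forces the conjugation action of $G$ on $\{S_1,\dots,S_k\}$ to be transitive, since the product over any proper orbit would otherwise be a proper $G$-invariant subgroup of $N$. Let $H=N_G(S_1)$, so $[G:H]=k$ and $N\le H$.

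First I would extend $\theta$, viewed as a character of $S_1$, to a character of the normalizer $H$. The conjugation action of $H$ on $S_1$ yields a homomorphism $\alpha\colon H\to\mathrm{Aut}(S_1)\cong\mathrm{Aut}(S)$, and if $\hat\theta$ denotes the given extension of $\theta$ to $\mathrm{Aut}(S)$, then $\psi:=\hat\theta\circ\alpha\in\textrm{Irr}(H)$ satisfies $\psi|_{S_1}=\theta$ while $\psi|_{S_j}=\theta(1)\cdot 1_{S_j}$ for $j\neq 1$, because each such $S_j$ centralizes $S_1$ and therefore lies in $\ker\alpha$. Next I would assemble these single-factor extensions across all the $S_i$. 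The cleanest way is to observe that $G/C_G(N)$ embeds in $\mathrm{Aut}(N)\cong\mathrm{Aut}(S)\wr S_k$ and that $\theta^k$ admits a canonical extension $\tilde\theta$ to this wreath product, obtained by tensor-inducing $\hat\theta$ up the base subgroup $\mathrm{Aut}(S)^k$; pulling $\tilde\theta$ back along $G\to G/C_G(N)\hookrightarrow\mathrm{Aut}(S)\wr S_k$ then produces a character $\Psi$ of $G$ of degree $\theta(1)^k$ with $\Psi|_N=\theta^k$. Concretely, one chooses coset representatives $1=g_1,\dots,g_k$ of $H$ in $G$ with $S_i=g_iS_1g_i^{-1}$ and writes $\Psi$ as the tensor-induced class function built from $\psi$ and the $g_i$.

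The main obstacle is the verification that this assembled class function is an honest ordinary character of $G$, not merely a projective representation, and that its restriction to $N$ is the full tensor product $\theta\otimes\cdots\otimes\theta$ rather than $\theta$ on one factor tensored with trivial characters on the others. Both points rely crucially on $\hat\theta$ being defined on all of $\mathrm{Aut}(S)$, so that the permutation action of $G$ on $\{S_1,\dots,S_k\}$ can be absorbed into genuine automorphisms of the factors; this keeps the tensor-induction construction within the category of ordinary representations and forces the resulting character of $N$ to have degree $\theta(1)^k$ with the correct values on each simple factor.
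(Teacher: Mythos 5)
Your proposal is correct and is essentially the standard argument: the paper itself gives no proof but cites \cite[Lemma $5$]{Bia}, whose proof is exactly this reduction to $G/C_G(N)\hookrightarrow\mathrm{Aut}(N)\cong\mathrm{Aut}(S)\wr S_k$ followed by the canonical extension of $\hat\theta^{\otimes k}$ from the base group to the wreath product acting on the $k$-fold tensor power. The only inessential detour is your first-paragraph extension of $\theta$ to $N_G(S_1)$, which the wreath-product construction already subsumes.
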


The following result due to R. Higgs will be used to verify Step $3.$ The statement given below can
be found in \cite[Theorem $2.3$]{Moreto}.

\begin{lemma}\emph{(\cite[Theorem \textrm{B}]{Higgs}).}\label{lem12}
Let $N$ be a normal subgroup of a group G and let $\theta\in {\rm{Irr}}(N)$ be G-invariant. Assume
that $\chi(1)/\theta(1)$ is a power of a fixed prime p for every $\chi\in {\rm{Irr}}(G|\theta).$
Then $G/N$ is solvable.
\end{lemma}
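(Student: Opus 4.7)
The plan is to convert the hypothesis into a statement about projective representations of the quotient $K:=G/N$, and then invoke a classification-based theorem on groups whose irreducible projective characters all have $p$-power degree. Because $\theta$ is $G$-invariant, every $\chi\in{\rm Irr}(G|\theta)$ satisfies $\chi_N=e\theta$, and Lemma~\ref{lem2}(c) produces a factorization $\chi=\theta_0\tau_0$, where $\theta_0$ is the character of a projective representation of $G$ extending $\theta$ (of degree $\theta(1)$), while $\tau_0$ is the character of an irreducible projective representation of $K$ with respect to a $2$-cocycle $\alpha$ on $K$ that arises as the obstruction to extending $\theta$. The assignment $\chi\mapsto\tau_0$ bijects ${\rm Irr}(G|\theta)$ with the set of irreducible $\alpha$-projective characters of $K$, and since $\tau_0(1)=\chi(1)/\theta(1)$, the hypothesis becomes exactly the assertion that every irreducible $\alpha$-projective character of $K$ has $p$-power degree. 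The lemma thus reduces to proving that any such $K$ is solvable.

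Next I would linearize the cocycle: let $\tilde K$ be a finite central extension of $K$ with a cyclic central subgroup $Z\le Z(\tilde K)$ satisfying $\tilde K/Z\cong K$, and let $\lambda\in{\rm Irr}(Z)$ be the linear character corresponding to $\alpha$, so that irreducible $\alpha$-projective characters of $K$ biject with characters $\psi\in{\rm Irr}(\tilde K)$ satisfying $\psi_Z=\psi(1)\lambda$, with equal degrees. Since $K$ is solvable if and only if $\tilde K$ is, the task becomes: if every $\psi\in{\rm Irr}(\tilde K)$ lying above $\lambda$ has $p$-power degree, then $\tilde K$ is solvable. I would argue by minimal counterexample on $|\tilde K|$. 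Proper quotients and appropriately chosen sub-central-extensions inherit the same type of hypothesis by Clifford theory and may therefore be assumed solvable, forcing the unique minimal normal subgroup of $\tilde K/Z$ to be a direct product $S^k$ with $S$ nonabelian simple. Exploiting the tensor structure of projective representations of $S^k$ reduces to $k=1$, and further analysis shows that one may take $\tilde K$ to be a quotient of the Schur cover of a single nonabelian simple group $S$.

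The main obstacle is the resulting CFSG case analysis: for every nonabelian finite simple group $S$, every prime $p$, and every linear character $\lambda$ of the centre of the Schur cover $\tilde S$, one must exhibit two faithful irreducible characters of $\tilde S$ above $\lambda$ whose degrees have ratio not a power of $p$. For $S$ of Lie type in defining characteristic $p$, the Steinberg character of degree $|S|_p$ lies above the trivial $\lambda$ and can be paired with a non-trivial unipotent character of degree coprime to $p$; the nontrivial cohomology classes are handled using explicit knowledge of the spin or exceptional covers. In the cross-characteristic case one combines a character of $p$-defect zero with a character of defect divisible by $p$. For the alternating, sporadic, and Tits families the required pair is read off the ATLAS and the classical degree formulas for covers such as $2.A_n$, $3.A_n$, and $6.A_n$. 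Putting these cases together contradicts the minimality assumption and completes the proof.
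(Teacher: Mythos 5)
This statement is not proved in the paper at all: it is quoted verbatim from R.~Higgs, \emph{Groups whose projective character degrees are powers of a prime} (Theorem~B), via Moret\'o's restatement, and is used as a black box in Step~3. So there is no internal argument to compare yours against; the expected ``proof'' here is simply the citation. That said, your outline does follow the strategy of Higgs's actual proof: pass to $\alpha$-projective characters of $G/N$ via Clifford theory (your use of Lemma~\ref{lem2}(c) and the identity $\tau_0(1)=\chi(1)/\theta(1)$ is correct), linearize the cocycle on a representation group, and reduce by minimal counterexample to quasi-simple groups, where the classification is invoked.

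As a standalone proof, however, what you have written is a plan rather than an argument, and the gap is precisely where all the content lies. The inheritance of the hypothesis by sections of the central extension, the reduction from $S^k$ to $S$ via the decomposition $M(S^k)\cong M(S)^k$, and above all the verification that every quotient of every Schur cover $\tilde S$ has, over each central character $\lambda$, an irreducible character of non-$p$-power degree --- none of these is carried out, and the last item is the theorem. Moreover, one of the few concrete claims you do make is false: for $S$ of Lie type in defining characteristic $p$ there is \emph{no} nontrivial unipotent character of degree coprime to $p$ (every nontrivial unipotent character has positive $a$-value, hence degree divisible by $p$; e.g.\ $L_3(q)$ has unipotent degrees $1$, $q(q+1)$, $q^3$). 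What you actually need there is a character of degree that is not a $p$-power --- $q(q+1)$ in that example, or a semisimple character of degree prime to $p$, whose existence for all $\lambda$ follows from Michler--It\^o-type results but still requires proof. Unless you intend to reprove Higgs's theorem in full, the correct move is to cite it, as the paper does.
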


The following lemma will be used to verify Step $4.$

\begin{lemma}\emph{(\cite[Lemma $6$]{Hupp}).}\label{lem7} Suppose that $M\unlhd
G'=G''$ and that for any $\lambda\in \textrm{Irr}(M)$ with $\lambda(1)=1,$ $\lambda^g=\lambda$ for
all $g\in G'.$ Then $M'=[M,G']$ and $|M/M'|$ divides the order of the Schur multiplier of $G'/M.$
\end{lemma}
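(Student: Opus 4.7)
The plan is to reinterpret the character-theoretic hypothesis as saying that $G'$ centralises the abelianisation $M/M'$, thereby realising $G'/M'$ as a perfect central extension of $G'/M$, and then to invoke the standard Schur-multiplier bound on such extensions.

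For the first step I would show $[M,G']\subseteq M'$. Linear characters of $M$ are precisely the characters of $M/M'$, and by hypothesis each such $\lambda\in{\rm{Irr}}(M)$ satisfies $\lambda^g=\lambda$, i.e.\ $\lambda(g^{-1}mg)=\lambda(m)$ for all $m\in M$ and $g\in G'$. Writing $n=g^{-1}mgm^{-1}\in M$, this forces $\lambda(n)=1$ for every linear $\lambda$, so $n\in \bigcap_\lambda \ker\lambda = M'$. Hence $[M,G']\subseteq M'$, and combined with the trivial inclusion $M'=[M,M]\subseteq [M,G']$ this gives the desired equality $M'=[M,G']$.

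Put $H:=G'/M$ and $\overline{G'}:=G'/M'$. By the previous paragraph, $\overline{M}:=M/M'$ lies in the centre of $\overline{G'}$, and since $G'=G''$ is perfect so is its quotient $\overline{G'}$. Therefore
\[
1\longrightarrow M/M'\longrightarrow G'/M'\longrightarrow H\longrightarrow 1
\]
is a central extension of $H$ with perfect middle group. By the universal property of the Schur representation group of a perfect group, every such central extension is a homomorphic image of the Schur cover, so the kernel $M/M'$ is a quotient of a subgroup of the Schur multiplier $M(H)$; in particular $|M/M'|$ divides $|M(G'/M)|$.

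The substantive point is the first step, where invariance of linear characters of $M$ under $G'$-conjugation is converted into triviality of that conjugation action on $M/M'$. Once that is in hand the rest is purely formal, relying only on well-known properties of perfect central extensions.
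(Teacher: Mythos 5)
Your proof is correct. The paper itself does not prove this lemma but cites it from Huppert (\cite[Lemma 6]{Hupp}), and your argument is the standard one given there: dualizing the $G'$-invariance of the linear characters of $M$ to get $[M,G']\le M'$, observing that $G'/M'$ is then a perfect central extension of the perfect group $G'/M$, and invoking the universal central extension to bound $|M/M'|$ by the order of the Schur multiplier. (Minor quibble: the kernel $M/M'$ is a quotient of the full Schur multiplier, not merely of a subgroup of it, but this only strengthens what you need.)
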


\section{The simple Ree groups}
The Ree group ${}^2F_4(q^2),$ where $q^2=2^{2m+1}$ with $m\geq 0,$ is an exceptional group of Lie
type of rank $2$ discovered by Ree in \cite{Ree}. The order of this group is
$$q^{24}(q^{12}+1)(q^8-1)(q^6+1)(q^2-1).$$ This group is nonabelian simple unless $m=0.$ In this
case, the group ${}^2F_4(2)'$ is simple and is called the Tits group. In his preprint, Huppert
verified the conjecture for the Tits group so that we can assume $m\geq 1.$ The character table of
this family of simple groups is available in \cite{Chevie} and is reproduced in Table \ref{Tab2}. The maximal subgroups of ${}^2F_4(q^2)$
were determined by G. Malle in \cite{Malle91}. In Table \ref{Tab1}, we list the maximal subgroups
of ${}^2F_4(q^2)$ together with their indices. We denote by $\Phi_n:=\Phi_n(q),$ the cyclotomic
polynomial in variable $q.$ We have
$$\Phi_1\Phi_2=q^2-1,\Phi_4=q^2+1,\Phi_8=q^4+1,
\Phi_{12}=q^4-q^2+1,\Phi_{24}=q^8-q^4+1.$$ In Table \ref{Tab1}, we use the following notation.
$$u_1:=q^2-\sqrt{2}q+1, u_2:=q^2+\sqrt{2}q+1,$$ $$w_1:=q^4-\sqrt{2}q^3+q^2-\sqrt{2}q+1,
w_2:=q^4+\sqrt{2}q^3+q^2+\sqrt{2}q+1.$$ Then $u_1u_2=\Phi_{8}$ and $w_1w_2=\Phi_{24}.$ Recall that
if $n$ is a positive integer and $p$ is a prime then $n_p$ and $n_{p'}$ are the largest $p$-part
and $p'$-part of $n,$ respectively. That is $n=n_pn_{p'},$ where $(n_p,n_{p'})=1$ and $n_p$ is a
$p$-power.

Let $\ell_i\neq 3,i=1,2,3,$ be prime divisors of $w_1,w_2$ and $\Phi_{12},$ respectively. In the
next lemma, we collect some properties of the character degree set of the simple Ree group
${}^2F_4(q^2),q^2=2^{2m+1},$ where $m\geq 1.$

\begin{lemma}\label{lem8} Let $H$ be the simple Ree group ${}^2F_4(q^2),q^2=2^{2m+1}, m\geq 1$ and
let $a$ be a nontrivial character degree of $H.$ Then the following hold.

$(i)$ If  $a\neq q^{24}$ and $(\ell_1\ell_2,a)=1,$ then $a$ is one of the following degrees:
$$q\sqrt{2}\Phi_1\Phi_2\Phi_4^2\Phi_{12}/2,q^4\Phi_1^2\Phi_2^2\Phi_4^2\Phi_8^2/3,q^{13}\sqrt{2}\Phi_1\Phi_2\Phi_4^2\Phi_{12}/2.$$

$(ii)$ If  $a\neq q^{24}$ and $(\ell_3,a)=1,$ then $a$ is one of the following degrees:
$$\Phi_1\Phi_2\Phi_8^2\Phi_{24}, q^4\Phi_1^2\Phi_2^2\Phi_4^2\Phi_{24}/6,q^4\Phi_8^2\Phi_{24}/2,$$ $$q^2\Phi_1^2\Phi_2^2\Phi_8^2\Phi_{24},
q^6\Phi_1\Phi_2\Phi_8^2\Phi_{24},\Phi_1^2\Phi_2^2\Phi_4^2\Phi_8^2\Phi_{24},q^4\Phi_1^2\Phi_2^2\Phi_4^2\Phi_8^2/3.$$

$(iii)$ We have that $(2\Phi_1\Phi_2\Phi_4,a)>1.$

$(iv)$ If  $(\ell_1\ell_2\ell_3,a)=1$ then $a\in \{q^{24},q^4\Phi_1^2\Phi_2^2\Phi_4^2\Phi_8^2/3\}.$

$(v)$ We have that $q^4\Phi_1^2\Phi_2^2\Phi_4^2\Phi_8^2/3$ is an isolated degree of $H.$

$(vi)$ If $x,y\in {\rm{cd}}(H)-\{1,q^{24}\},$ then $(x,y)>1.$

$(vii)$ $H$ has no consecutive degrees.

$(viii)$ If $a\neq q^{24},$ then $a_2\leq q^{13}\sqrt{2}/2=2^{13m+6}.$

$(ix)$ If $a,b\in {\rm{cd}}(H)$ such that $b=z a,$ where $z>1$ is  odd, then $z\geq q^2-1.$

$(x)$ The smallest nontrivial character degree of $H$ is
$q\sqrt{2}\Phi_1\Phi_2\Phi_4^2\Phi_{12}/2.$
\end{lemma}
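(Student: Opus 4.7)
The plan is to prove all ten parts by direct case analysis on the character table of $H={}^2F_4(q^2)$ reproduced in Table~\ref{Tab2}. Each nontrivial character degree can be written as a monomial in $q$, $\sqrt{2}$, and the cyclotomic polynomials $\Phi_1,\Phi_2,\Phi_4,\Phi_8,\Phi_{12},\Phi_{24}$, with the sub-factorizations $\Phi_8=u_1u_2$ and $\Phi_{24}=w_1w_2$ used when needed. The number-theoretic facts I will invoke repeatedly are that the cyclotomic polynomials $\Phi_d(q)$ with $d\in\{1,2,4,8,12,24\}$ are pairwise coprime apart from the prime $3$ (which can divide $\Phi_{12}$ and $\gcd(\Phi_1,\Phi_2)$), that $q$ is a power of $2$ with $q\sqrt{2}\in\mathbb{Z}$, and that the choices $\ell_1,\ell_2,\ell_3\neq 3$ ensure that coprimality of a degree with $\ell_i$ is equivalent to omission of the corresponding cyclotomic factor $w_1$, $w_2$, or $\Phi_{12}$ from its expression.

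Items (i), (ii), (iv), and (x) require only a direct scan. For (i) I extract those degrees whose expression contains neither $w_1$ nor $w_2$; for (ii) those whose expression omits $\Phi_{12}$; and (iv) is the intersection, leaving only $q^{24}$ and $q^4\Phi_1^2\Phi_2^2\Phi_4^2\Phi_8^2/3$. For (x), the candidate $q\sqrt{2}\Phi_1\Phi_2\Phi_4^2\Phi_{12}/2$ has order $q^{10}$ in $q$, and every other nontrivial degree carries an additional factor of $q$ or an additional cyclotomic polynomial of positive degree, so its minimality is clear from the leading terms. Part (iii) follows by checking that every nontrivial entry of Table~\ref{Tab2} contains at least one factor among $q$, $\Phi_1$, $\Phi_2$, $\Phi_4$, so its $\gcd$ with $2\Phi_1\Phi_2\Phi_4$ exceeds $1$.

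The remaining items build on this inspection. Part (v) follows from (iv): any proper multiple of $q^4\Phi_1^2\Phi_2^2\Phi_4^2\Phi_8^2/3$ in $\textrm{cd}(H)$ would again be coprime to $\Phi_{12}\Phi_{24}$, forcing it to equal $q^{24}$, but the ratio $3q^{24}/(q^4\Phi_1^2\Phi_2^2\Phi_4^2\Phi_8^2)$ is not an integer; and every proper nontrivial divisor visible in Table~\ref{Tab2} fails divisibility by some $\Phi_i^{e_i}$ that occurs in the target. Part (vi) is the most laborious step: each non-Steinberg nontrivial degree contains $\Phi_1\Phi_2$, $\Phi_8$, or a nontrivial $2$-part, and these three possibilities interlock so that every two such degrees share at least one prime. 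For (vii) every nontrivial non-Steinberg degree is divisible by some nontrivial factor of $q^4-1$, so two consecutive degrees would force $q^2-1\mid 1$, while $q^{24}$ is separated from every other degree by more than $1$. Finally, (viii) and (ix) come from tabulating the $2$-parts and odd parts of each degree: the $2$-parts of non-Steinberg degrees are bounded above by $q^{13}\sqrt{2}/2=2^{13m+6}$, and direct inspection of the possible odd ratios between entries delivers the lower bound $q^2-1$ in (ix).

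The main obstacle throughout is the sheer bookkeeping, especially in (v), (vi), (viii), and (ix), each of which requires comparing roughly two dozen explicit degrees pairwise. I would keep the work tractable by partitioning the degrees according to which of $\Phi_{12}$ and $\Phi_{24}$ appear in their expressions, reducing every case to a check within or between the resulting four blocks.
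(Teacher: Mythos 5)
Your overall strategy---reading everything off Table~\ref{Tab2}---is the same as the paper's for parts $(i)$--$(v)$ and $(viii)$--$(x)$, which the paper likewise dismisses as obvious by inspection. The genuine problems are in $(vi)$ and $(vii)$, precisely the two parts for which the paper actually writes out an argument. For $(vi)$ you propose that every non-Steinberg nontrivial degree is divisible by $\Phi_1\Phi_2$, by $\Phi_8$, or by a nontrivial power of $2$, and that ``these three possibilities interlock.'' They do not: $q^2-1$, $q^4+1$ and $2$ are pairwise coprime, so two degrees lying in disjoint subsets of your three categories acquire no common prime from this classification. Concretely, take $x=\Phi_4\Phi_8^2\Phi_{12}\Phi_{24}$ (present for all $q^2\geq 8$), which is odd and coprime to $\Phi_1\Phi_2$, and $y=q\sqrt{2}\Phi_1\Phi_2\Phi_4^2\Phi_{12}/2$, which is coprime to $\Phi_8$: their common factor lives in $\Phi_4$ and $\Phi_{12}$, which your scheme never examines. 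The paper's route is structurally different and worth adopting: fix primes $\ell_1\mid w_1$, $\ell_2\mid w_2$, $\ell_3\mid\Phi_{12}$ (all $\neq 3$); if $\ell_3$ divides both $x$ and $y$ you are done, otherwise one of them lies in the short list of $(ii)$, and either it shares $\ell_1$ or $\ell_2$ with the other, or $(i)$ and $(iv)$ force one of the two degrees to be divisible by $2\Phi_1\Phi_2\Phi_4$, at which point $(iii)$ finishes the argument.

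Part $(vii)$ as you argue it is a non sequitur: from ``each of two consecutive degrees is divisible by some nontrivial factor of $q^4-1$'' you cannot conclude $q^2-1\mid 1$, because the two factors may be coprime (consecutive integers routinely split the divisors of a fixed number between them; indeed $\gcd(q^2\Phi_{12}\Phi_{24},q^4-1)=3$ while $\gcd(\Phi_1\Phi_2\Phi_8^2\Phi_{24},q^4-1)=q^2-1$ is prime to $3$, so there is no universal common divisor to exploit). The correct deduction, which the paper makes, is that consecutive degrees are coprime, hence by $(vi)$ one of them equals $q^{24}$, and neither $q^{24}-1$ nor $q^{24}+1$ occurs in Table~\ref{Tab2}. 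A smaller slip occurs in $(v)$: a proper multiple of $q^4\Phi_1^2\Phi_2^2\Phi_4^2\Phi_8^2/3$ lying in ${\rm{cd}}(H)$ need not ``again be coprime to $\Phi_{12}\Phi_{24}$,'' since the multiplier can introduce new primes; the statement is still correct, but it must be verified by scanning the table for degrees divisible by, say, $q^4\Phi_8^2\Phi_1^2\Phi_2^2$, not by the coprimality argument you give.
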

\begin{proof} Statements $(i)-(v)$ and $(viii)-(x)$ are obvious by checking Table \ref{Tab2}.
For $(vi),$ if $\ell_3$ divides both $x$ and $y,$ then we are done. Hence we assume that
$(\ell_3,x)=1$  or $(\ell_3,y)=1.$ Without loss of generality, assume $(\ell_3,x)=1.$ Then $x$ is
one of the degrees appearing in $(ii).$ It follows that either $\ell_1\ell_2$ divides $x$ or
$\ell_1\ell_2$ is  prime to $x.$ Assume first that $\ell_1\ell_2\mid x.$ If $(\ell_1\ell_2,y)>1$
then we are done. So assume $(\ell_1\ell_2,y)=1$ so that $y$ is one of the degrees in $(i).$ In
this case, we can see that $2\Phi_1\Phi_2\Phi_4$ divides $y$ and so we have that $(x,y)$ is
divisible by $(x,2\Phi_1\Phi_2\Phi_4).$ Applying $(iii)$ we obtain $(x,2\Phi_1\Phi_2\Phi_4)>1$ so
that $(x,y)>1.$ Now assume that $(\ell_1\ell_2,x)=1.$ It follows that
$x=q^4\Phi_1^2\Phi_2^2\Phi_4^2\Phi_8^2/3$ and so $2\Phi_1\Phi_2\Phi_4$ divides $x.$ Using the same
argument, we have that $(x,y)$ is divisible by $(2\Phi_1\Phi_2\Phi_4,y)$ which is nontrivial by
$(iii)$ so that $(x,y)>1.$ This proves $(vi).$ Next we will show that $H$ has no two consecutive
degrees. By way of contradiction, assume that there exist $x,y\in {\rm{cd}}(H)$ such that $x=y+1.$
Since $H$ is nonabelian simple, it has no character of degree $2$ so that we can assume $y>1$ and
then $x>y>1.$ As $x=y+1,$ we deduce that $(x,y)=1$ and since $x>y>1,$ by $(vi),$ we have $x=q^{24}$
or $y=q^{24}.$ It follows that $q^{24}-1\in {\rm{cd}}(H)$ or $q^{24}+1\in {\rm{cd}}(H).$ However we
can check that $H$ has no such degrees. This contradiction proves $(vii).$
\end{proof}

\begin{lemma}\label{lem9} Let $H$ be the Ree group ${}^2F_4(q^2),q^2=2^{2m+1}, m\geq
1.$ If $K$ is a maximal subgroup of $H$ such that the index $|H:K|$ divides some character degree
$\chi(1)$ of $H,$ then one of the following cases holds:

$(i)$ $K\cong P_a,$ $|H:P_a|=\Phi_4\Phi_8^2\Phi_{12}\Phi_{24}$ and
$$\chi(1)/|H:P_a|\in \{1,q^2-1,q^2,q^2+1\}.$$

$(ii)$ $K\cong P_b,$ $|H:P_b|=\Phi_4^2\Phi_8\Phi_{12}\Phi_{24}$ and
$$\chi(1)/|H:P_b|\in \{q\sqrt{2}/2(q^2-1),u_1(q^2-1),u_2(q^2-1),(q^2-1)^2,q^4,q^4+1\}.$$
\end{lemma}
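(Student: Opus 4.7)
The plan is to apply Malle's classification of maximal subgroups of $H={}^2F_4(q^2)$ (as recorded in Table \ref{Tab1}) and, for each maximal subgroup $K$, test whether the index $|H:K|$ divides at least one character degree listed in Table \ref{Tab2}. The maximal subgroups fall into three natural families: (a) the two maximal parabolics $P_a$ and $P_b$; (b) the reductive maximals such as $SU_3(q^2){:}2$, $PGU_3(q^2){:}2$, $Sp_4(q^2){:}2$ and the wreath-type subgroup involving ${}^2B_2(q^2)$, along with the normalizers of the maximal cyclic tori of orders $w_1$, $w_2$, $\Phi_{12}$, $\Phi_4$ and related; and (c) the subfield subgroup ${}^2F_4(q_0^2)$ when $q^2=q_0^{2r}$ for some prime $r$. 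For each of these $K$ I would write $|H:K|$ as an explicit product of a power of $q$ and cyclotomic factors $\Phi_1,\Phi_2,\Phi_4,\Phi_8,\Phi_{12},\Phi_{24}$ (equivalently $u_1,u_2,w_1,w_2$) and confront it with each degree in Table \ref{Tab2}.

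For the two parabolic cases the indices are already given in the statement, namely
\[|H:P_a|=\Phi_4\Phi_8^2\Phi_{12}\Phi_{24},\qquad |H:P_b|=\Phi_4^2\Phi_8\Phi_{12}\Phi_{24}.\]
These contain no power of $q$, and I would simply divide every degree in Table \ref{Tab2} by each of them in turn; the integer quotients are exactly the two sets displayed in (i) and (ii), which confirms the lists there. This part is a direct computation against the character table.

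The bulk of the work is ruling out every other maximal subgroup. I would use Lemma \ref{lem8}(i)--(iv) as a prime-sieve. If $K$ is a torus normalizer of order dividing $w_1$ (respectively $w_2$, respectively $\Phi_{12}$), then $|H:K|$ contains all factors coprime to $w_1$ (respectively $w_2$, $\Phi_{12}$) to their full power in $|H|$, and in particular the $q$-part of $|H:K|$ is $q^{24}$, forcing $q^{24}\mid \chi(1)$ and hence $\chi(1)=q^{24}$ by Lemma \ref{lem8}; but then the cyclotomic part of $|H:K|$ cannot divide $1$, a contradiction. For the reductive maximals $SU_3(q^2){:}2$, $PGU_3(q^2){:}2$, $Sp_4(q^2){:}2$ and $({}^2B_2(q^2)\wr 2)$, the index carries cyclotomic factors (typically $\Phi_{12}\Phi_{24}$ or $\Phi_8\Phi_{24}$, etc.) to powers that exceed what appears in any degree of Table \ref{Tab2}, or else carry a power of $q$ together with a bad prime $\ell_1\ell_2$ whose co-occurrence is forbidden by parts (i)--(iv) of Lemma \ref{lem8}. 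Finally, for the subfield maximal ${}^2F_4(q_0^2)$, the index is divisible by the full $q$-power $q^{24}/q_0^{24}$ together with a cyclotomic remainder too large to appear as a character-degree factor.

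The main obstacle I anticipate is the second paragraph's case-by-case filtering, because several non-parabolic subgroups have indices whose cyclotomic parts look superficially compatible with entries of Table \ref{Tab2}; for these I will lean heavily on Lemma \ref{lem8}(i)--(iv), using the fact that the only degree $a$ with $(\ell_1\ell_2\ell_3,a)=1$ is $q^{24}$ or $q^4\Phi_1^2\Phi_2^2\Phi_4^2\Phi_8^2/3$, to force contradictions on the $\ell_i$-part of $|H:K|$. Once those exclusions are complete, the only surviving maximal subgroups are $P_a$ and $P_b$, and the explicit quotient lists in (i) and (ii) follow from the direct division performed in the second paragraph.
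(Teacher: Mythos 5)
Your skeleton (verify $P_a$ and $P_b$ by direct division against Table \ref{Tab2}, then exclude every other maximal subgroup in Table \ref{Tab1}) matches the paper's, and your handling of the parabolics, the torus normalizers and the subfield subgroup is essentially sound. The genuine gap is the exclusion of the reductive maximals $3.U_3(q^2){:}2$, $PGU_3(q^2){:}2$, $Sz(q^2)\wr 2$ and $Sz(q^2){:}2$ (note also that $Sp_4(q^2){:}2$ does not occur in Table \ref{Tab1}). Neither of the two mechanisms you propose for these cases works. First, the cyclotomic parts of their indices are \emph{not} too large: for instance $|H:3.U_3(q^2){:}2|=q^{18}\Phi_1\Phi_2\Phi_8^2\Phi_{24}/2$, and $\Phi_1\Phi_2\Phi_8^2\Phi_{24}$ divides several entries of Table \ref{Tab2}, e.g.\ $q^6\Phi_1\Phi_2\Phi_8^2\Phi_{24}$; the analogous statement holds for the other three subgroups. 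Second, Lemma \ref{lem8}$(i)$--$(iv)$ constrains degrees that are \emph{coprime} to $\ell_1,\ell_2,\ell_3$, whereas here you only know that a degree is \emph{divisible} by $|H:K|$; divisibility by the index gives no coprimality information about the degree, so the sieve cannot be run in the direction you need.

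The tool that actually closes all of these cases uniformly -- and is the paper's one-line argument -- is the $2$-part bound of Lemma \ref{lem8}$(viii)$: every non-parabolic index in Table \ref{Tab1} has $2$-part at least $q^{16}/2=2^{16m+7}>2^{13m+6}=q^{13}\sqrt{2}/2$, so such an index can divide no degree except possibly $q^{24}$, which is impossible since none of these indices is a $2$-power. The same bound handles the subfield subgroup: its $2$-part is $q_0^{24\alpha-24}$, and $q_0^{24\alpha-24}\le q^{13}\sqrt{2}/2<q^{14}=q_0^{14\alpha}$ forces $10\alpha<24$, contradicting $\alpha\ge 3$. You should replace your second and third paragraphs by this comparison of $2$-parts. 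One further small point: if you carry out the division for $P_b$ literally, the degree $\Phi_4^2\Phi_8\Phi_{12}\Phi_{24}$ from Table \ref{Tab2} gives quotient $1$, which does not appear in the displayed list of case $(ii)$, so your claim that the quotients are ``exactly'' those sets needs that caveat.
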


\begin{proof} If $K$ is one of the parabolic subgroups $P_a$ or $P_b,$ then the result is
obvious. For the remaining maximal subgroups of $H$ except ${}^2F_4(q_0^2),$ we can see that the
$2$-part of the indices are larger than $q^{13}\sqrt{2}/2$ so that these indices cannot divide any
degrees of $H.$ Finally, assume $K\cong {}^2F_4(q_0^2),$ where $q^2=q_0^{2\alpha},\alpha$ prime.
Assume $q_0^2=2^{2l+1}.$ Then $\alpha=\frac{2m+1}{2l+1}$ is odd, so that $\alpha\geq 3$ is an odd
prime. The $2$-part of the index of ${}^2F_4(q_0^2)$ in ${}^2F_4(q^2)$ is $q_0^{24\alpha-24}.$
Moreover as this index is not a $2$-power, it cannot divide the degree of the Steinberg character
of $H$ so that $q_0^{24\alpha-24}\leq q^{13}\sqrt{2}/2<q^{14}=q_0^{14\alpha}.$ It follows that the
$24\alpha-24<14\alpha$ and hence $10\alpha<24.$ Thus $\alpha\leq 2,$ a contradiction.
\end{proof}
The following results are well known, see for example \cite{Hupp}. We note that the inclusion of
the value $(q^2-1)^2$ in Lemma \ref{lem11}$(b)$ causes no difference as it is less than the
smallest index of  the maximal subgroups of the Suzuki groups $Sz(q^2).$
\begin{lemma}\label{lem10} Let $q\geq 8$ be an even prime power. Then the
following hold:

$(a)$ The Schur multiplier of $L_2(q)$ is trivial and ${\rm{cd}}(L_2(q))=\{1,q-1,q,q+1\}.$

$(b)$ If $K$ is a maximal subgroup of $L_2(q)$ whose index  divides some nontrivial character
degree of $L_2(q),$ then $K$ is a Frobenius group of index $q+1.$ Moreover $q+1$ is the smallest
index of maximal subgroups of $L_2(q).$
\end{lemma}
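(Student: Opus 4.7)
The plan is to reduce both parts to classical theorems and then do a small amount of bookkeeping. Both statements are in the same spirit as the lemmas Huppert used in his original paper \cite{Hupp} for $L_2(q)$ and $Sz(q^2)$, so the proposal is essentially to cite Dickson's theorems and verify the numerical claims.

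For part (a), I would appeal to Dickson's classical computation of the character table of $L_2(q)$ in even characteristic: since the center of $SL_2(q)$ is trivial when $q$ is even, $L_2(q) = SL_2(q)$, and its irreducible character degrees are exactly $1$, $q-1$, $q$, and $q+1$ (the trivial character, the non-trivial principal series characters, the Steinberg character, and the cuspidal characters). For the Schur multiplier, the standard fact is that $M(L_2(q))$ is trivial for $q = 2^n$ with $n \geq 3$; the sole even exception $L_2(4) \cong A_5$, which has Schur multiplier of order $2$, is excluded by the hypothesis $q \geq 8$. These assertions can be extracted from the Atlas or from Gorenstein--Lyons--Solomon.

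For part (b), the plan is to invoke Dickson's classification of subgroups of $L_2(q)$. For $q = 2^n \geq 8$ the maximal subgroups, up to conjugacy, are: the Borel subgroup $B$ of order $q(q-1)$, which is a Frobenius group with elementary abelian $2$-kernel of order $q$ and cyclic complement of order $q-1$, of index $q+1$; the two classes of dihedral subgroups $D_{2(q-1)}$ and $D_{2(q+1)}$, of indices $q(q+1)/2$ and $q(q-1)/2$ respectively; and the subfield subgroups $L_2(q_0)$ with $q = q_0^r$ for $r$ prime.

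The verification is then just a comparison of sizes. Any index that divides some element of ${\rm cd}(L_2(q)) \setminus \{1\} = \{q-1,q,q+1\}$ must be at most $q+1$. Both dihedral indices $q(q\pm 1)/2$ already exceed $q+1$ for $q \geq 4$, and the subfield index $q(q^2-1)/\bigl(q_0(q_0^2-1)\bigr)$ is strictly larger still. Only the Borel subgroup survives, with index exactly $q+1$, which of course divides the character degree $q+1$; this simultaneously gives the Frobenius conclusion and shows that $q+1$ is the smallest maximal-subgroup index. I do not anticipate any real obstacle here, since every ingredient is essentially a citation; the only thing to watch is that the exceptional subgroup $A_5 \le L_2(q)$ (appearing for certain small $q$ in odd characteristic) does not occur in the even-characteristic list.
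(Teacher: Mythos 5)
Your proposal is correct, and it matches what the paper implicitly relies on: the paper gives no proof of this lemma at all, merely asserting that the statements are well known and pointing to \cite{Hupp}, so the classical ingredients you cite (the character table of $SL_2(2^n)=L_2(2^n)$, the triviality of the Schur multiplier for $q=2^n\geq 8$, and Dickson's list of subgroups with the index comparison ruling out everything but the Borel subgroup) are exactly the intended justification. Your numerical checks are accurate, and your remark that the exceptional subgroups $A_4$, $S_4$, $A_5$ cause no trouble in even characteristic (no elements of order $4$, and $A_5\cong L_2(4)$ is subsumed by the subfield case) correctly disposes of the only point that needed care.
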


\begin{lemma}\label{lem11} Let $q^2=2^{2m+1},$ where $m\geq 1.$ Then the
following hold.:

$(a)$ The Schur multiplier of $Sz(q^2)$ is trivial when $q^2>8$ while the Schur multiplier of
$Sz(8)$ is elementary abelian of order $4,$ and
$${\rm{cd}}(Sz(q^2))=\{1,q^4,q^4+1,(q^2-1)u_1,(q^2-1)u_2,
q\sqrt{2}(q^2-1)/2\}.$$

$(b)$ If $K$ is a maximal subgroup of $Sz(q^2)$ whose index divides some nontrivial character
degree of $Sz(q^2)$ or $(q^2-1)^2,$ then $K$ is a Frobenius group of index $q^4+1.$ Moreover
$q^4+1$ is the smallest index of  maximal subgroups of $Sz(q^2).$
\end{lemma}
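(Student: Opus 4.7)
The plan is to rely on Suzuki's original work on $Sz(q^2)$: the character degree list is exactly what he computed in his 1962 paper, and the Schur multiplier is trivial for $q^2>8$ while being elementary abelian of order $4$ for $Sz(8)$ by the classical results of Steinberg and of Alperin--Gorenstein. So part (a) is essentially a citation.

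For part (b), I would invoke Suzuki's classification of the maximal subgroups of $H=Sz(q^2)$: the Borel subgroup $B$ of order $q^4(q^2-1)$ (index $q^4+1$); the normalizer $N_0$ of a split torus (order $2(q^2-1)$); the normalizers $N_1,N_2$ of the two non-split cyclic tori (orders $4u_1$ and $4u_2$); and the subfield subgroups $Sz(q_0^2)$ where $q^2=q_0^{2\alpha}$ and $\alpha$ is an odd prime. The plan is then to carry out a $2$-part comparison, exactly in the spirit of Lemma \ref{lem9}.

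From the list in (a), the only nontrivial character degrees whose $2$-parts exceed $1$ are the Steinberg $q^4$ (with $2$-part $q^4=2^{4m+2}$) and the cuspidal degree $q\sqrt{2}(q^2-1)/2=2^m(q^2-1)$ (with $2$-part $2^m$), while $(q^2-1)^2$ is odd. In contrast, each non-Borel maximal subgroup $K$ has $|H:K|_2\geq q^4/4=2^{4m}$, which dominates both $2^m$ and $1$ for every $m\geq 1$; so a non-Borel index dividing a character degree or $(q^2-1)^2$ would have to divide the pure $2$-power $q^4$. But the non-Borel indices $q^4(q^4+1)/2$, $q^4(q^2-1)u_2/4$, $q^4(q^2-1)u_1/4$, and $|H|/|Sz(q_0^2)|$ all carry nontrivial odd factors (coming from $q^4+1$, $u_i$, $q^2-1$, or the quotients of such factors between $q$ and $q_0$), so none is a power of $2$. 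Hence no non-Borel index divides any character degree or $(q^2-1)^2$, and the Borel index $q^4+1$ is the only candidate; it indeed divides the character degree $q^4+1$, while $(q^2-1)^2<q^4+1$ rules it out of dividing $(q^2-1)^2$.

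To finish, I would observe that $B$ is a Frobenius group (its Sylow $2$-subgroup, a Suzuki $2$-group, is complemented by a cyclic group of order $q^2-1$ acting fixed-point-freely), and that $q^4+1=|H:B|$ is the smallest maximal-subgroup index since every other index in the list above strictly exceeds it. The only step that needs a moment of care is the subfield subgroup, where one uses that $\alpha\geq 3$ is an odd prime to ensure $q_0^{4(\alpha-1)}$ is already large enough for the $2$-part bound; this parallels the treatment of ${}^2F_4(q_0^2)$ given in the proof of Lemma \ref{lem9}.
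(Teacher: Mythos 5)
Your argument is correct, but note that it does considerably more than the paper itself: the paper offers no proof of Lemma \ref{lem11}, simply declaring both parts ``well known'' with a pointer to \cite{Hupp} and adding only the one-line observation that $(q^2-1)^2$ is smaller than $q^4+1$, the least index of a maximal subgroup of $Sz(q^2)$, so that admitting $(q^2-1)^2$ changes nothing. What you supply is a self-contained verification from Suzuki's subgroup classification and character table, run as a $2$-part comparison in the style of the paper's proof of Lemma \ref{lem9}; that is a legitimate alternative and arguably more informative. One numerical slip is worth fixing: your blanket claim that every non-Borel maximal subgroup $K$ satisfies $|H:K|_2\geq q^4/4=2^{4m}$ fails for the subfield subgroups $Sz(q_0^2)$ once $q_0^2\geq 8$, since there $|H:K|_2=q^4/q_0^4=2^{2(2m+1)(1-1/\alpha)}$, which drops below $2^{4m}$ exactly when $q_0^2>2$. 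What you actually need, and what does hold, is the weaker bound $|H:K|_2>2^m$: since $\alpha\geq 3$ is an odd prime, the exponent $2(2m+1)(1-1/\alpha)\geq \frac{4}{3}(2m+1)>m$. You flag the subfield case yourself at the end, so the argument goes through once that estimate is stated correctly; the remaining points (the odd parts of the non-Borel indices being nontrivial, the Borel subgroup being a Frobenius group, and $q^4+1$ being minimal among the indices) all check out.
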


\section{Verifying Huppert's Conjecture for the simple Ree groups}
We are now ready to verify  Huppert's Conjecture for the simple Ree groups.
\subsection{Verifying Step $1$} Show $G'=G''.$ By way of contradiction, suppose
that $G'\neq G''.$ Then there exists a normal subgroup $N\unlhd G$ of $G$ such that $G/N$ is
solvable minimal with respect to being nonabelian.  By Lemma \ref{lem3}, $G/N$ is an $r$-group for
some prime $r$ or $G/N$ is a Frobenius group.

{\bf Case $1.$} $G/N$ is an $r$-group. Then there exists $\psi\in \textrm{Irr}(G/N)$ such that
$\psi(1)=r^b>1.$ By \cite[Theorem $1.1$]{Malle}, we deduce that $\psi(1)=St_H(1)=r^b$ and so $r=2.$
By Thompson's Theorem \cite[Corollary $12.2$]{Isaacs}, $G$ has a nonlinear character $\chi\in
\textrm{Irr}(G)$ such that $\chi(1)$ is odd. As $(\chi(1),|G:N|)=1,$ by Lemma \ref{lem1}$(a),$ we
have $\chi_N\in \textrm{Irr}(N)$ and hence by Gallagher's Theorem, we obtain that $\chi\tau\in
\textrm{Irr}(G)$ for all $\tau\in \textrm{Irr}(G/N).$ Thus $St_H(1)<St_H(1)\chi(1)\in
{\rm{cd}}(G),$ which contradicts Lemma \ref{lem4}.

{\bf Case $2.$} $G/N$ is a Frobenius group with Frobenius kernel $F/N,$ $|F/N|=r^a,$ $1<f=|G:F|\in
{\rm{cd}}(G)$ and $r^a\equiv 1 (\mbox{mod $f$}).$ By Lemma \ref{lem3}$(b)(2),$ if $\chi\in
\textrm{Irr}(G)$ such that $\chi(1)$ is isolated then either $f=\chi(1)$ or $r^a\mid \chi(1)^2.$ By
Lemma \ref{lem4}, the Steinberg character of $H$ is isolated in $H$ and hence by Lemma
\ref{lem3}$(b),$ either $f=q^{24}$ or $r=2.$

Assume first that $f=q^{24}.$ As $r\nmid f,$ $r$ must be odd. Let $\varphi\in \textrm{Irr}(G)$ with
$\varphi(1)=q\sqrt{2}\Phi_1\Phi_2\Phi_4^2\Phi_{12}/2.$  As no proper multiple of $f$ is a character
of $G$ and $\varphi(1)\nmid f,$ we deduce from Lemma \ref{lem3}$(b)(1)$ that $r^a\mid
\varphi(1)^2.$ As $r$ is odd, we obtain $r^a\mid \varphi(1)_{2'}^2=
\Phi_1^2\Phi_2^2\Phi_4^4\Phi_{12}^2.$ We have
$$\Phi_1\Phi_2\Phi_4^2\Phi_{12}=(q^4-1)(q^2+1)(q^4-q^2+1)=(q^4-1)(q^6+1)<q^{10}.$$ As $r^a\mid
\varphi(1)_{2'}^2,$ we deduce that $r^a\leq \varphi(1)_{2'}^2<q^{20}.$ But then as $f\mid r^a-1,$
we obtain  $f=q^{24}\leq r^a-1<q^{20}-1<q^{20},$ which is impossible.

Thus $r=2.$ Then $f\in {\rm{cd}}(G)$ is odd and so $f\neq q^4\Phi_1^2\Phi_2^2\Phi_4^2\Phi_8^2/3,$
which is an isolated degree of $G$ by Lemma \ref{lem8}$(v).$ Hence $r^a\mid
(q^4\Phi_1^2\Phi_2^2\Phi_4^2\Phi_8^2/3)^2$ by Lemma \ref{lem3}$(b)(2).$ It follows that $r^a\mid
q^8$ as $r^a$ is even. As in the previous case, we have that $f$ divides $r^a-1$ and since $r^a\leq
q^8,$ we deduce that $f\leq q^8-1.$ However as $q^8-1<q^{10}\leq
q\sqrt{2}\Phi_1\Phi_2\Phi_4^2\Phi_{12}/2,$ where the latter is
 the smallest nontrivial character degree of $H$ by Lemma \ref{lem8}$(x),$ we see that $f$
cannot be a character degree of $G.$ This contradiction shows that $G'=G''.$

\subsection{Verifying Step $2$ } Let $M\leq G'$
be a normal subgroup of $G$ such that $G'/M$ is a chief factor of $G.$ As $G'$ is perfect, $G'/M$
is nonabelian so that $G'/M\cong S^k$ for some nonabelian simple group $S$ and some integer $k\geq
1.$

$(i)$ Eliminating the alternating groups. Assume that $S=A_n,n\geq 7.$ Let $\theta_i,i=1,2$ be
irreducible characters of $S$ obtained from Lemma \ref{lem5}$(i).$ Then $\theta_1(1)=n(n-3)/2,$
$\theta_2(1)=(n-1)(n-2)/2=\theta_1(1)+1$ and both $\theta_i$ extend to ${\rm{Aut}}(A_n)\cong S_n.$
By Lemma \ref{lem6}, $\theta_i^k\in \textrm{Irr}(G'/M)$ extend to $G/M,$ hence $\theta_i(1)^k\in
{\rm{cd}}(G)$ and $\theta_i(1)^k,i=1,2,$ are coprime.  By Lemma \ref{lem8}$(vi),$ one of the
degrees $\theta_i(1)^k,i=1,2,$ must be $q^{24}.$ However we have that $(n-1,n-2)=1$ and
$(n,n-3)=(n,3)$ so that $\theta_i(1)^k$ can never be a power of $2.$ This shows that $S$ is not an
alternating group of degree at least $7.$

$(ii)$ Eliminating the sporadic simple groups and the Tits group. It follows from \cite[Table $1$]{Bia}) that there exist
two nontrivial irreducible characters $\theta_i,i=1,2,$ such that $\theta_i$ extend to
${\rm{Aut}}(S),$ $\theta_i(1),i=1,2,$ are coprime and $\theta_i(1)$ are not  $2$-power.  Now argue as in case $(i),$ we obtain a contradiction.

$(iii)$ If $S$ is a simple group of Lie type in characteristic $p,$ with $S\neq {}^2F_4(2)',$ then
$k=1$ and $p=2.$ By way of contradiction, assume that $k\geq 2.$ Let $\theta$ be the Steinberg
character of $S.$ Then $\theta(1)=|S|_p$ and $\theta$ extends to ${\rm{Aut}}(S).$ By Lemma
\ref{lem6}, $\theta^k\in \textrm{Irr}(G'/M)$ extends to $G/M,$ hence $\theta(1)^k=|S|_p^k\in
{\rm{cd}}(G).$ Since $q^{24}$ is the unique nontrivial prime power character degree of $G$ by
\cite[Theorem $1.1$]{Malle}, we deduce that $\theta(1)^k=q^{24}.$ In particular, we have $p=2.$
Write $\theta(1)=q_1^s.$ Then $q_1^{sk}=q^{24}.$ Let $\psi=\tau\times \theta\times\cdots\times
\theta\in \textrm{Irr}(S^k),$ where $\tau\in \textrm{Irr}(S)$ with $1<\tau(1)\neq \theta(1).$ Then
$\psi(1)=\tau(1)q_1^{s(k-1)}\nmid q^{24}$ and is nontrivial, so that it must divide some character
degree of $G,$ which is different from $q^{24}.$ By Lemma \ref{lem8}$(viii),$ we have that
$q_1^{s(k-1)}=q^{24(k-1)/k}<q^{14}$ and hence $24(k-1)<14k,$ which implies that $k\leq 2.$
Therefore $k=2.$  Let $C$ be a normal subgroup of $G$ such that $C/M=C_{G/M}(G'/M).$ Then
$G'C/C\cong S^2$ is a unique minimal normal subgroup of $G/C$ so that $G/C$ embeds into
${\rm{Aut}}(S)\wr \Z_2,$ where $\Z_2$ is a cyclic group of order $2.$ Let $B={\rm{Aut}}(S)^2\cap
G/C.$ Then $|G/C:B|=2.$ As above, let $\psi=1\times \theta\in \textrm{Irr}(G'C/C).$ Then $\psi$
extends to $B$ and so $B$ is the inertia group of $\psi$ in $G/C$ so that by Lemma \ref{lem2}$(a),$
$|G/C:B|\psi(1)=2\psi(1)\in {\rm{cd}}(G).$ Hence $2\theta(1)=2q_1^s=2q^{12}\in {\rm{cd}}(G).$
Obviously $1<2q^{12}<q^{24},$ which leads to a contradiction again by using \cite[Theorem
$1.1$]{Malle}. Thus $k=1.$

$(iv)$ If $S$ is a simple group of Lie type in characteristic $2$ and $S\neq {}^2F_4(2)',$ then
$S\cong {}^2F_4(q^2).$ We will prove this by eliminating other possibilities for $S.$ Assume that
$S$ is a simple group of Lie type in characteristic $2$ and $S$ is not the Tits group. We have
shown that $G'/M\cong S$ and $|S|_2=q^{24}=2^{12(2m+1)}.$ Observe that if $\theta\in
\textrm{Irr}(S)$ is extendible to ${\rm{Aut}}(S),$ then $\theta$ extends to $G/C,$ where $C/M=C_{G/M}(G'/M),$ so that
$\theta(1)\in {\rm{cd}}(G).$ In fact, we will choose $\theta$ to be a unipotent character of $S,$
so that by results of Lusztig, $\theta$ is extendible to ${\rm{Aut}}(S)$ apart from some exceptions
(see \cite[Theorem $2.5$]{Malle08}). We refer to \cite[$13.8,13.9$]{Car85} for the classification
of unipotent characters and the notion of symbols. In Table \ref{Tab5}, for each simple group of
Lie type $S$ in characteristic $p,$ we list the $p$-part of  some unipotent character  of $S$ that
is extendible to ${\rm{Aut}}(S).$

$(a)$ Case $S\cong L_n^\epsilon(2^b),$ where $b\geq 1$ and $n\geq 2.$ We have $bn(n-1)=24(2m+1).$
If $n=2$ then $b=12(2m+1)$ so that $S=L_2(q^{24})$ and hence $S$ has a character of degree
$q^{24}+1.$ Obviously this degree does not divide any degree of $G$ since $q^{24}+1\nmid
|{}^2F_4(q^2)|.$ Next if $n=3$ then $b=4(2m+1)$ and so $S=L_3^\epsilon(q^8).$ By
\cite[$(13.8)$]{Car85}, $S$ possesses a unipotent character parametrized by the partition $(1,2)$
of degree $q^8(q^8+\epsilon 1).$ However by checking Table \ref{Tab2}, ${}^2F_4(q^2)$ has no such
degree. If $n=4,$ then $b=2(2m+1)$ so that $S=L_4^\epsilon(q^4).$ In this case, the unipotent
character parametrized by the partition $(2,2)$ has degree $q^8(q^8+1).$ As above, this degree does
not belong to ${\rm{cd}}(G).$ Thus we can assume that $n\geq 5.$ By Table \ref{Tab5}, $S$ possesses
a unipotent character $\chi$ different from the Steinberg character with
$\chi(1)_2=2^{b(n-1)(n-2)/2}.$ By Lemma \ref{lem8}$(viii),$ we have $b(n-1)(n-2)/2<7(2m+1).$
Multiplying both sides by $2n,$ we obtain $bn(n-1)(n-2)=24(2m+1)(n-2)<14n(2m+1),$ and so
$24(n-2)<14n.$ Thus $5n<24$ so that $ n<5,$ which is a contradiction.

$(b)$ Case $S\cong S_{2n}(q_1),$ or $O_{2n+1}(q_1),$ where $q_1=2^b,$ $b\geq 1,$ $n\geq 2$ and
$S\neq S_4(2).$ As $S_{2n}(2^b)\cong O_{2n+1}(2^b),$ we can assume $S=S_{2n}(q_1)$ and $S\neq
S_4(2).$ We have $bn^2=12(2m+1).$ If $n=2$ then $b=3(2m+1)$ and so $S=S_{4}(q^6).$ By
\cite[$(13.8)$]{Car85}, $S$ possesses a unipotent character labeled by the symbol
$\binom{0\:1\:2}{\:\:-\:\:}$ of degree $q^6(q^{6}-1)^2/2.$ However by checking Table \ref{Tab2},
${}^2F_4(q^2)$ has no such character degree. If $n=3$ then $3b=4(2m+1)$ and so $q_1^{3}=q^8.$ By
\cite[$(13.8)$]{Car85}, $S$ possesses a unipotent character labeled by the symbol
$\binom{1\:2}{\:1\:}$ with degree $q_1^3(q_1^2-q_1+1)(q_1^2+q_1+1)=q^8(q_1^2-q_1+1)(q_1^2+q_1+1),$
which leads to a contradiction since $G$ has no degree whose $2$-part is $q^8.$ If $n=4$ then
$4b=3(2m+1)$ and hence $q_1^{4}=q^6.$ By Table \ref{Tab5}, there exists a unipotent character
$\chi$ with $\chi(1)_2=2^{9b-1}.$ As $4b=3(2m+1)$ and $m\geq 1,$ we have that
$9b-1=6(2m+1)+(6m-1)/4= 13m+6+(2m-1)/4>13m+6,$ which contradicts Lemma \ref{lem8}$(viii).$ Hence we
can assume that $n\geq 5.$ By Table \ref{Tab5}, $S$ possesses a nontrivial irreducible character
$\chi$ different from the Steinberg character with $\chi(1)_p=2^{b(n-1)^2-1}.$ Since
$b(n-1)^2-1\geq b(n-1)^2-b=bn(n-2),$ by Lemma \ref{lem8}$(viii),$ we have $bn(n-2)<7(2m+1).$
Multiplying both sides by $n,$ we obtain $bn^2(n-2)=12(n-2)(2m+1)<7n(2m+1)$ so that $5n<24$ and
hence $n<5,$ a contradiction.

$(c)$ Case $S\cong O_{2n}^\epsilon(q_1),$ where $q_1=2^b,$ $b\geq 1,$ and $n\geq 4.$ We have
$bn(n-1)=12(2m+1).$ If $n=4,$ then $b=2m+1$ hence $q_1=q^2.$ Now if $S=O_8^+(q^2),$ then $S$ has a
unipotent character $\chi$ with $\chi(1)_2=q^{2(4^2-3\cdot 4+3)}=q^{14}$ and if $S=O_8^-(q^2),$
then $S$ has a unipotent character $\chi$ with $\chi(1)_2=q^{2(4^2-3\cdot 4+2)}=q^{12},$ by Table
\ref{Tab5}. However $G$ has no such degrees by Table \ref{Tab2} since $q^2\geq 8.$ Observe that
$q^{14}>q^{13}\sqrt{2}/2$ and $q^{13}\sqrt{2}/2=q^{12}$ if and only if $q^2=2.$ Thus we can assume
that $n\geq 5.$ By Table \ref{Tab5}, $S$ possesses a unipotent character $\chi$ different from the
Steinberg character with $\chi(1)_2\geq 2^{b(n-1)(n-2)}.$ By Lemma \ref{lem8}$(viii),$ we have
$b(n-1)(n-2)<7(2m+1).$ Multiplying both sides by $n,$ we obtain
$bn(n-1)(n-2)=12(n-2)(2m+1)<7n(2m+1)$ so that $5n<24$ and hence $n<5,$ which is a  contradiction.

$(d)$ Case $S\cong G_2(q_1),$ where $q_1=2^b,$ $b\geq 1.$ We have $6b=12(2m+1)$ and so $b=2(2m+1).$
Thus $S=G_2(q^4),$ where $q^4>2$ so that $S$ has an irreducible character of degree $q^{24}-1$ by
\cite[Table IV-$2$]{EnoYa}. However this degree divides no degrees of $G$ since
$q^{24}-1=\Phi_1\Phi_2\Phi_3\Phi_4\Phi_6\Phi_8\Phi_{12}\Phi_{24}\nmid |{}^2F_4(q^2)|.$

$(e)$ Case $S\cong {}^2B_2(q_1^2),$ where $q_1^2=2^{2n+1},$ $n\geq 1.$ We have $2(2n+1)=12(2m+1)$
and so $2n+1=6(2m+1),$ which is impossible.

$(f)$ Case $S\cong {}^2G_2(q_1^2),$ where $q_1^2=3^{2n+1},$ $n\geq 1.$ This case cannot occur as
the characteristic of ${}^2G_2(q_1^2)$ is $3$.

$(g)$ Case $S\cong {}^2F_4(q_1^2),$ where $q_1^2=2^{2n+1},$ $n\geq 1.$ We have $12(2n+1)=12(2m+1)$
so that $n=m.$ Thus $S\cong {}^2F_4(q^2).$

$(h)$ For the remaining cases, we can argue as follows. We have $|S|_2=2^{12(2m+1)}.$ Lemma
\ref{lem8}$(viii)$ yields $\chi(1)_2\leq 2^{13m+6},$ where $\chi$ is a unipotent character
different from the Steinberg character listed in Table \ref{Tab5}. Using these two properties, we
will obtain a contradiction. For example, assume $S\cong E_8(q_1),$ where $q_1=2^b,$ $b\geq 1.$ We
have $120b=12(2m+1)$ and so $10b=2m+1.$ By Table \ref{Tab5}, $S$ possesses a unipotent character
$\chi$ with $\chi(1)_2=2^{91b}.$ By Lemma \ref{lem8}$(viii),$ we have $91b<7(2m+1).$ Thus
$91b<7\cdot 10b=70b,$ a contradiction. This completes the proof of Step $2.$
\subsection{Verifying Step $3$} Suppose $\theta\in \textrm{Irr}(M)$ with $\theta(1)=1$ and let $I=I_{G'}(\theta).$ We
need to show that $I=G'.$ By way of contradiction, suppose  $I<G'.$ Write $\theta^I=\sum_{i=1}^s
e_i\mu_i,$ where $\mu_i\in \textrm{Irr}(I).$ As $M\leq I<G'$ and $G'/M\cong H,$ there exists a
subgroup $U$ such that $I\leq U$ and $U/M$ is a maximal subgroup of $G'/M.$ By Lemma
\ref{lem2}$(a),$ we have $|G':I|\mu_i(1)=|G':U||U:I|\mu_i(1)$ divides some character degree of $G.$
Thus the index $|G':U|=|G'/M:U/M|$ of ${}^2F_4(q^2)$ must divide some character degree of
${}^2F_4(q^2)$ and hence by Lemma \ref{lem9}, $U/M\cong P_a$ or $U/M\cong P_b,$ where $P_a$ and
$P_b$ are maximal parabolic subgroups of ${}^2F_4(q^2).$ Note that both unipotent radicals
$[q^{22}]$ and $[q^{20}]$ of the parabolic subgroups $P_a$ and $P_b,$ respectively, are nonabelian
by using the commutator relations (see \cite{HH}). Let $t=|U:I|$ and recall that if $N\unlhd G$ and
$\lambda\in \textrm{Irr}(N),$ then $\textrm{Irr}(G|\lambda)$ denotes the set of all irreducible
constituents of $\lambda^G.$

{\bf Case $1:$} $U/M\cong P_a.$ Recall that $P_a\cong [q^{22}]:(L_2(q^2)\times (q^2-1)).$ Let $L$
and $V$ be subgroups of $U$ containing $M$ such that $L/M\cong [q^{22}]$ and $V/M\cong L_2(q^2),$
and let $W=LV.$ Then $M\unlhd W\unlhd U, L\cap V=M,$  and $W/L\cong V/M\cong L_2(q^2).$ By Lemma
\ref{lem9}$(i),$ $t\mu_i(1)$ divides $q^2\pm 1$ or $q^2.$  We consider the following cases:

{\bf Case $1(a):$} $t\mu_j(1)\mid q^2\pm 1$ for some $j.$ Then $t$ is odd. As $L\unlhd U,$ we
deduce that $I\leq IL\leq U$ so that $t=|U:IL|\cdot |IL:I|.$ Now $|IL:I|=|L:L\cap I|.$ As $|L:L\cap
I|$ divides $|L:M|=q^{20},$ if $L\not\leq I,$ then $|L:L\cap I|>1$ is even so that $t$ is even, a
contradiction. Thus $L\leq I\leq U.$ Assume $W\not\leq I.$ Then $I\lneq WI\leq U$ and
$t=|U:WI|\cdot |WI:I|.$ As $|WI:I|=|W:W\cap I|$ and $|WI:I|>1,$ we deduce that $|W:W\cap I|>1$ and
divides $q^2\pm 1.$ Observe that $W/L\cong \text{L}_2(q^2)$ and since $L\leq W\cap I\lneq W,$ we
deduce that $|W:W\cap I|$ is divisible by some index of a maximal subgroup of $L_2(q^2).$ By Lemma
\ref{lem10}(b), we have $|W:W\cap I|=q^2+1$ and $W\cap I/L$ is isomorphic to the Borel subgroup of
$L_2(q^2),$ in particular $W\cap I/L$ is nonabelian. Furthermore $t=q^2+1$ and hence $t\mu_i(1)\mid
q^2\pm 1,$ for all $i,$ which implies that $\mu_i(1)=1$ for all $i.$ Thus $\theta$ extends to
$\theta_0\in \text{Irr}(I).$ By Gallagher's Theorem, we have $\theta^I=\sum_{\tau\in
\text{Irr}(I/M)}\tau\theta_0$ and so $\tau(1)=1$ for all $\tau\in \text{Irr}(I/M),$ which implies
that $I/M$ is abelian, which is a contradiction as $I/M$ possesses a nonabelian section $W\cap
I/L.$ Thus $W\leq I.$

Let $\lambda$ be an irreducible constituent of $(\mu_j)_L.$ Then $\lambda_M=e\theta$ for some
integer $e.$ Now $\lambda(1)=e$ divides $|L/M|=q^{22}$ by \cite[Corollary $11.29$]{Isaacs}. By
\cite[Lemma $6.8$]{Isaacs}, we have that $\lambda(1)\mid \mu_j(1),$ which yields that $\lambda(1)$
is odd. Thus $e=1$ and so $\lambda$ is an extension of $\theta$ to $L.$ Since $L/M\cong [q^{22}]$
is nonabelian, it possesses a nonlinear irreducible character $\tau$ with even degree. By
Gallagher's Theorem, $\gamma=\tau\lambda\in \textrm{Irr}(L|\theta)$ and
$\gamma(1)=\tau(1)\lambda(1)$ is even. If $\mu_k$ is any irreducible constituent of $\gamma^I$ then
as $L\unlhd I,$ by \cite[Lemma $6.8$]{Isaacs} we have that $\gamma(1)\mid \mu_k(1)$ and since
$t\mu_k(1)$ divides $q^2\pm 1$ or $q^2,$ we deduce that $t\mu_k(1)\mid q^2$ and so as $t$ is odd,
$t=1$ and hence $I=U$ and $\mu_k(1)$ is a $2$-power for any $\mu_k\in \textrm{Irr}(I|\gamma).$ Let
$L\leq J=I_W(\gamma)$ and suppose that $J<W.$ Let $\delta\in \textrm{Irr}(J|\gamma).$ We have
$\delta^W\in \textrm{Irr}(W|\gamma)$ and $\delta^W(1)=|W:J|\delta(1).$ Since $W\unlhd I,$ we deduce
that $|W:J|\delta(1)\mid q^2$ and hence $|W:J|\leq q^2,$ and $|W:J|$ is divisible by the index of
some maximal subgroup of $W/L\cong L_2(q^2),$ so that by Lemma \ref{lem10}(b), we obtain $|W:J|\geq
q^2+1,$ a contradiction. Thus $\gamma$ is $W$-invariant and every irreducible constituent of
$\textrm{Irr}(W|\gamma)$ is a $2$-power. By Lemma \ref{lem12}, we deduce that $W/L\cong L_2(q^2)$
is solvable, which is impossible as $q^2\geq 8.$ Thus this case cannot happen.

{\bf Case $1(b):$} $t\mu_i(1)\mid q^2$ for all $i.$ Then $t\mid q^2$ and all $\mu_i(1)$ are
$2$-powers. We will show that $I/M$ is nonsolvable. If $t=1,$ then $I=U,$ hence $I/M\cong P_a$ is
nonsolvable. Assume $t>1.$ As $W/M\cong [q^{22}]:L_2(q^2)$ is nonsolvable, if $W\leq I,$ then we
are done. So assume $W\not\leq I.$ Let $X=W\cap I.$ Then $X\lneq W.$  Since $|WI:I|=|W:W\cap
I|=|W:X|,$ we have $t=|U:WI|\cdot |W:X|,$ and hence $|W:X|$ is a nontrivial divisor of $q^2.$ If
$L\leq X,$ then $X/L$ is a proper subgroup of $W/L\cong L_2(q^2)$ and hence $|W:X|\geq q^2+1$ by
Lemma \ref{lem10}(b), which is impossible as $1<|W:X|\leq q^2.$  Thus $L\not\leq X.$ Since $L\unlhd
W$ and $X=W\cap I\leq W,$ we deduce that $X\lneq XL\leq W$ and $L\unlhd XL\leq W.$ It follows that
$|W:X|=|W:XL|\cdot |XL:X|$ is a nontrivial divisor of $q^2.$ If $|W:XL|>1,$ then as $L\leq XL\lneq
W,$ by Lemma \ref{lem10}(b), we obtain $|W:XL|\geq q^2+1,$ which is impossible since $|W:XL|\leq
|W:X|\leq q^2.$ Thus $W=XL$ and so $L_2(q^2)\cong W/L\cong  X/X\cap L.$ We have $M\unlhd X\cap
L\unlhd X\unlhd I$ and $X/X\cap L\cong L_2(q^2)$ so that $I/M$ is nonsolvable. Hence $\mu_i(1)$ are
$2$-power for all $i,$ $\theta$ is $I$-invariant and $I/M$ is nonsolvable. Now Lemma \ref{lem12}
will provide a contradiction.


{\bf Case $2:$} $U/M\cong P_b.$ Recall that $P_b\cong [q^{20}]:(Sz(q^2)\times (q^2-1)).$ Let $L$
and $V$ be subgroups of $U$ containing $M$ such that $L/M\cong [q^{20}],$ $V/M\cong Sz(q^2)$ and
let $W=LV.$ It follows that $L\unlhd U, W\unlhd U,V\cap L=M ,$ and $W/L\cong V/M\cong Sz(q^2).$ Let
$\mathcal{B}=\{q^4,q^4+1,(q^2-1)u_1,(q^2-1)u_2,q\sqrt{2}(q^2-1)/2, (q^2-1)^2\}.$ By Lemma
\ref{lem9}$(ii),$ we deduce that for each $i,$ $t\mu_i(1)$ divides one of the members of
$\mathcal{B}.$

{\bf Case $2(a):$} $t$ is odd. As $L\unlhd U,$ we have $I\leq IL\leq U$ so that $t=|U:IL|\cdot
|IL:I|.$ As $|IL:I|=|L:L\cap I|,$ if $L\not\leq I,$ then $|L:L\cap I|>1$ is even so is $t,$ a
contradiction. Thus $L\leq I\leq U.$ Assume that $W\not\leq I.$ Then $I\lneq WI\leq U$ and
$t=|U:WI|\cdot |WI:I|.$ As $|WI:I|=|W:W\cap I|$ and $|WI:I|>1,$ we deduce that $|W:W\cap I|>1$ and
divides one of the members in $\mathcal{B}.$ Observe that $W/L\cong Sz(q^2)$ and since $L\leq W\cap
I\lneq W,$ we deduce that $|W:W\cap I|$ is divisible by some index of a maximal subgroup of
$Sz(q^2).$ By Lemma \ref{lem11}(b), we have $|W:W\cap I|=q^4+1$ and $W\cap I/L$ is isomorphic to
the Borel subgroup of $Sz(q^2),$ in particular $W\cap I/L$ is nonabelian. It follows that $t=q^4+1$
and hence $t\mu_i(1)= q^4+1,$ as $q^4+1$ divides no other members of $\mathcal{B},$ which implies
that $\mu_i(1)=1$ for all $i.$ Now arguing as in the first paragraph of  Case $1(a),$ we obtain
a contradiction. Therefore $W\leq I\leq U.$

Let $\lambda$ be an irreducible constituent of $\theta^L.$ We have that $\lambda(1)=e\theta(1)$ for
some integer $e.$ By \cite[Corollary $11.29$]{Isaacs}, we deduce that $e\mid q^{20}.$ If $e=1$ then
$\theta$ extends to $\lambda\in \textrm{Irr}(L)$ so that as $L/M$ is nonabelian, $L/M$ has a
nontrivial irreducible character $\tau$ of even degree and then by Gallagher's Theorem
$\gamma=\tau\lambda\in \textrm{Irr}(L|\theta)$ with $\gamma(1)$ is even. If $e>1,$ then obviously
$e$ is even and so we choose $\gamma=\lambda\in \textrm{Irr}(L|\theta)$ and $\gamma(1)$ is even. In
both cases, we can choose $\gamma\in \textrm{Irr}(L|\theta)$ such that $\gamma(1)$ is even. Let $J$
be the stabilizer in $W$ of $\gamma.$ Write $\gamma^J=\delta_1+\delta_2+\cdots+\delta_k,$ where
$\delta_i\in \textrm{Irr}(J).$ Since $L\unlhd W\unlhd I,$ the degrees of irreducible constituents
of $\gamma^W$ divide some $\mu_i(1)$ and since $\mu_i(1)$ divides one of the member of
$\mathcal{B},$ we deduce that $\delta_i^W(1)=|W:J|\delta_i(1)$ divides either $q^4$ or
$q\sqrt{2}(q^2-1)/2$ for all $i,$ as $\delta_i(1)$ is even since $\gamma(1)\mid \delta_i(1).$ By
Lemma \ref{lem11}(b), we can deduce that $\gamma$ is $W$-invariant and if $\varphi\in
\textrm{Irr}(W|\gamma)$  then as $W\unlhd I,$ we have that $\varphi(1)$ divides $q^4$ or
$q\sqrt{2}(q^2-1)/2.$

Assume first that $q^2>8.$ By Lemma \ref{lem11}(a), the Schur multiplier of $W/L\cong Sz(q^2)$ is
trivial so that by \cite[Theorem $11.7$]{Isaacs}, $\gamma$ extends to ${\gamma}_0\in
\textrm{Irr}(W).$ Hence by Gallagher's Theorem, $\tau{\gamma}_0$ are all the irreducible constituents
of $\gamma^W,$ where $\tau\in \textrm{Irr}(W/L),$ and thus $\tau(1){\gamma}_0(1)=\gamma(1)\tau(1)$
divides $(q^2-1)q\sqrt{2}/2,$ or $q^4.$ But this is impossible since $W/L\cong Sz(q^2)$ has an
irreducible character of degree $q^4+1,$ which divides none of the degrees above.

Now assume $q^2=8.$ We have $W/L\cong Sz(8),$ $\gamma\in \textrm{Irr}(L)$ is $W$-invariant and all
irreducible constituents of $\gamma^W$ divide $64$ or $14.$ Write
$\gamma^W=f_1\phi_1+f_2\phi_2+\cdots,$ where $\phi_i\in \textrm{Irr}(W).$ Then
$\phi_i(1)=f_i\gamma(1)$ divides $64$ or $14.$ If $f_i=1$ for some $i,$ then $\gamma$ extends to
$\gamma_0\in \textrm{Irr}(W),$ and hence argue as in the previous case to obtain a contradiction.
Thus $f_i>1$ for all $i.$ Also by Lemma \ref{lem2}$(c),$ all $f_i$ are  character degrees of
irreducible projective representations of $W/L\cong Sz(8).$ Thus all $f_i>1$ are character degrees
of irreducible projective representations of $Sz(8)$ with $f_i$ dividing $64$ or $14.$ Using
\cite{atlas}, we have ${\rm{cd}}(Sz(8))=\{1,14,35,64,65,91\},$ and the projective but not ordinary
degrees of $Sz(8)$ are $40,56,64,104.$ It follows that $f_i$ is either $14$ or $64.$ Since
$\gamma^W=f_1\phi_1+\cdots+f_t\phi_t,$ and $\phi_i(1)=f_i\gamma(1),$ we deduce that
$|Sz(8)|=\sum_{i=1}^t f_i^2.$ Let $a$ and $b$ be the numbers of $f_i's$ which equal $14$ and $64,$
respectively. Then $8^2\cdot 5\cdot 7\cdot 13=14^2a+64^2b.$ Obviously both $a$ and $b$ are nonzero.
We have $4\cdot 16\cdot 5\cdot 7\cdot 13=4\cdot 7^2a+4\cdot 16\cdot 8^2b.$ After simplifying, we
obtain $16\cdot 5\cdot 7\cdot 13=7^2a+16\cdot 8^2b,$ and then $b=7b_1$ and $a=16a_1,$ where
$a_1,b_1\geq 1$ are integers. Thus $ 5\cdot 13=7a_1+8^2b_1$ so that $65=7a_1+64b_1.$ As $b_1\geq
1,$ we deduce that $b_1=1$ and then $1=7a_1,$ which is impossible.

{\bf Case $2(b):$} $t>1$ is even. Then $t\mu_i(1)$ divides $q^4$ or $q\sqrt{2}(q^2-1)/2$ for all
$i.$ Let $X=W\cap I\unlhd I.$ Assume first that $L\leq X.$ Then $L\leq I.$ Suppose that $W\not\leq
I.$ Then $I\lneq WI\leq U$ and $t=|U:WI|\cdot |WI:I|.$ As $|WI:I|=|W:X|$ and $|WI:I|>1,$ we deduce
that $|W:X|>1$ and divides $q^4$ or $q\sqrt{2}(q^2-1)/2.$ Hence $|W:X|\leq q^4$ and since $L\leq
X\lneq W,$ we deduce that $|W:X|$ is divisible by the index of some maximal subgroup of $Sz(q^2),$
which is impossible  by Lemma \ref{lem11}(b). Thus $W\leq I.$ But then $t=|U:I|$ divides
$|U:W|=q^2-1,$ which is an odd number, a contradiction. Thus  $L\not\leq X.$ It follows that
$X\lneq XL\leq W.$ We have that $|W:X|=|W:XL|\cdot |XL:X|>1$ and $|W:X|\leq t\leq q^4$ as
$t=|U:WI|\cdot |W:X|.$

Assume  $XL<W.$ We have $L\leq XL<W$ and $X\leq XL<W.$ Since $W/L\cong Sz(q^2),$ we deduce that
$|W:XL|$ is divisible by the index of some maximal subgroup of $Sz(q^2),$ and hence by Lemma
\ref{lem11}(b), $|W:XL|\geq q^4+1,$ which is impossible since $|W:XL|\leq |W:X|\leq q^4.$

Hence $W=XL$ and so $W/L\cong XL/L\cong X/X\cap L.$ Let $L_1=X\cap L.$ Then $M\unlhd L_1\unlhd
X\unlhd I$ and $X/L_1\cong Sz(q^2).$ Let $\lambda\in\textrm{Irr}(L_1|\theta).$ Observe that the
degree of every irreducible constituent of $\lambda^X$ must divide some degree $\mu_i(1)$ so that
it divides $q^4$ or $q\sqrt{2}(q^2-1)/2.$ Now Lemma \ref{lem11}(b) yields that $\lambda$ is
$X$-invariant. Applying the same argument as in the last two paragraphs of Case $2(a)$ for
$\lambda\in\textrm{Irr}(L_1|\theta)$ and $L_1\unlhd X$ with $X/L_1\cong Sz(q^2),$ we obtain a
contradiction.

This finishes the proof of Step $3.$

\begin{table}
 \begin{center}
  \caption{Character degrees of ${}^2F_4(q^2)$}\label{Tab2}
  \begin{tabular}{l|r}
   \hline
   Degree  & Multiplicity \\\hline
   $1$&$1$\\
   $q\sqrt{2}\Phi_1\Phi_2\Phi_4^2\Phi_{12}/2$&$2$\\
   $q^2\Phi_{12}\Phi_{24}$&$1$\\
   $\Phi_1\Phi_2\Phi_{8}^2\Phi_{24}$&$1$\\
   $q^4\cdot u_1^2\cdot w_1\cdot \Phi_1^2\Phi_2^2\Phi_{12}/12$&$1$\\
   $q^4\cdot u_2^2\cdot w_2\cdot \Phi_1^2\Phi_2^2\Phi_{12}/12$&$1$\\
   $q^4\Phi_1^2\Phi_2^2\Phi_4^2\Phi_{24}/6$&$1$\\
   $q^4\cdot w_1\cdot \Phi_1^2\Phi_2^2\Phi_4^2\Phi_{12}/4$&$2$\\
   $q^4\cdot u_1^2\cdot w_2\cdot\Phi_4^2\Phi_{12}/4$&$1$\\
   $q^4\cdot w_2\cdot\Phi_1^2\Phi_2^2\Phi_4^2\Phi_{12}/4$&$2$\\
   $q^4\cdot u_2^2\cdot w_1\cdot\Phi_4^2\Phi_{12}/4$&$1$\\
   $q^4\Phi_1^2\Phi_2^2\Phi_{12}\Phi_{24}/3$&$1$\\
   $q^4\Phi_1^2\Phi_2^2\Phi_4^2\Phi_8^2/3$&$2$\\
   $q^4\Phi_{8}^2\Phi_{24}/2$&$1$\\
   $u_1\cdot\Phi_1\Phi_2\Phi_4^2\Phi_{12}\Phi_{24}$&$q(q+\sqrt{2})/4$\\
   $\Phi_4^2\Phi_{8}\Phi_{12}\Phi_{24}$&$ (q^2-2)/2$\\
   $u_2\cdot\Phi_1\Phi_2\Phi_4^2\Phi_{12}\Phi_{24}$ &  $ (q-\sqrt{2})q/4$\\
   $q^2\Phi_1^2\Phi_2^2\Phi_{8}^2\Phi_{24}$&$ 1$\\
   $\Phi_1\Phi_2\Phi_{8}^2\Phi_{12}\Phi_{24}$&$ (q^2-2)/2$\\
   $q^{10}\Phi_{12}\Phi_{24}$&$ 1$\\
   $\Phi_4\Phi_{8}^2\Phi_{12}\Phi_{24}$&$ (q^2-2)/2$\\
   $q\sqrt{2}\cdot u_1\cdot\Phi_1^2\Phi_2^2\Phi_4^2\Phi_{12}\Phi_{24}/2$& $ (q+\sqrt{2})q/2$\\
   $q^{13}\sqrt{2}\Phi_1\Phi_2\Phi_4^2\Phi_{12}/2$&$ 2$\\
   $q\sqrt{2}\Phi_1\Phi_2\Phi_4^2\Phi_{8}\Phi_{12}\Phi_{24}/2$&$ q^2-{2}$\\
   $q\sqrt{2}\cdot u_2\cdot\Phi_1^2\Phi_2^2\Phi_4^2\Phi_{12}\Phi_{24}/2$& $(q-\sqrt{2})q/2$\\
   $u_1^2\cdot\Phi_1^2\Phi_2^2\Phi_4^2\Phi_{12}\Phi_{24}$&$(q+2\sqrt{2})(q^2-2)q/96$\\
   $w_1\cdot\Phi_1^2\Phi_2^2\Phi_4^2\Phi_{8}^2\Phi_{12}$&$ (q+\sqrt{2})(q^2+1)q/12$\\
   $q^4\cdot u_1\cdot\Phi_1\Phi_2\Phi_4^2\Phi_{12}\Phi_{24}$&$ (q+\sqrt{2})q/4$\\
   $u_1\cdot\Phi_1\Phi_2\Phi_4^2\Phi_{8}\Phi_{12}\Phi_{24}$&$(q-\sqrt{2})q(q+\sqrt{2})^2/8$\\
   $\Phi_1^2\Phi_2^2\Phi_{8}^2\Phi_{12}\Phi_{24}$&$ (q^2-8)(q^2-2)/48$\\
   $q^2\Phi_1\Phi_2\Phi_{8}^2\Phi_{12}\Phi_{24}$&$ (q^2-2)/2$\\
   $\Phi_1^2\Phi_2^2\Phi_4^2\Phi_{8}\Phi_{12}\Phi_{24}$&$(q^2-2)q^2/16$\\
   $q^6\Phi_1\Phi_2\Phi_{8}^2\Phi_{24}$&$ 1$\\
   $\Phi_1\Phi_2\Phi_4\Phi_{8}^2\Phi_{12}\Phi_{24}$&$(q^2-2)q^2/4$\\
   $\Phi_1^2\Phi_2^2\Phi_4^2\Phi_{8}^2\Phi_{24}$&$(q^2-2)(q^2+1)/6$\\
   $q^{24}$&$ 1$\\
   $q^2\Phi_4\Phi_{8}^2\Phi_{12}\Phi_{24}$&$(q^2-2)/2$\\
   $q^4\Phi_4^2\Phi_{8}\Phi_{12}\Phi_{24}$&$(q^2-2)/2$\\
   $\Phi_4^2\Phi_{8}^2\Phi_{12}\Phi_{24}$&$(q^2-8)(q^2-2)/16$\\
   $w_2\cdot\Phi_1^2\Phi_2^2\Phi_4^2\Phi_{8}^2\Phi_{12}$&$(q-\sqrt{2})(q^2+1)q/12$\\
   $q^4\cdot u_2\cdot\Phi_1\Phi_2\Phi_4^2\Phi_{12}\Phi_{24}$&$(q-\sqrt{2})q/4$\\
   $u_2\cdot\Phi_1\Phi_2\Phi_4^2\Phi_{8}\Phi_{12}\Phi_{24}$&$q+\sqrt{2})q(q-\sqrt{2})^2/8$\\
   $u_2^2\cdot\Phi_1^2\Phi_2^2\Phi_4^2\Phi_{12}\Phi_{24}$&$(q-2\sqrt{2})(q^2-2)q/96$\\
    \hline
   \end{tabular}
 \end{center}
\end{table}

\begin{table}
 \begin{center}
  \caption{Some unipotent characters of simple groups of Lie type} \label{Tab5}
  \begin{tabular}{l|l|r}
   \hline
   $S=S(p^b)$  & Symbol &$p$-part of degree\\ \hline
   $L_n^\epsilon(p^b),n\geq 3$ & $(1^{n-2},2)$&$p^{b(n-1)(n-2)/2}$\\
   $S_{2n}(p^b),p=2$&$\binom{0\:1\:2\:\cdots\:n-2\:n-1\:n}{\:\:1\:2\cdots\:n-2}$&$2^{b(n-1)^2-1}$\\

   $S_{2n}(p^b),p>2$ && $p^{b(n-1)^2}$\\
   $O_{2n+1}(p^b),p>2$  &$\binom{0\:1\:2\:\cdots\:n-2\:n-1\:n}{\:\:1\:2\cdots\:n-2}$& $p^{b(n-1)^2}$\\
   $O_{2n}^+(p^b)$&$\binom{0\:1\:2\:\cdots\:n-3\:n-1}{\:1\:2\:3\cdots\:n-2\:n-1}$&$p^{b(n^2-3n+3)}$\\
   $O_{2n}^-(p^b)$&$\binom{0\:1\:2\:\cdots\:n-2\:n-1}{\:\:1\:2\cdots\:n-2}$&$p^{b(n^2-3n+2)}$\\
   ${}^3D_4(p^b)$&$\phi_{1,3}''$&$p^{7b}$\\
   $F_4(p^b)$&$\phi_{9,10}$&$p^{10b}$\\
   ${}^2F_4(q^2)$&${}^2B_2[a],\epsilon$&$\frac{1}{\sqrt{2}}q^{13}$\\
   $E_6(p^b)$&$\phi_{6,25}$&$p^{25b}$\\
   ${}^2E_6(p^b)$&$\phi_{2,16}''$&$p^{25b}$\\
   $E_7(p^b)$&$\phi_{7,46}$&$p^{46b}$\\
   $E_8(p^b)$&$\phi_{8,91}$&$p^{91b}$\\\hline
\end{tabular}
\end{center}
\end{table}

\begin{table}
 \begin{center}
  \caption{The maximal subgroups of  ${}^2F_4(q^2)$}\label{Tab1}
  \begin{tabular}{l|r}
   \hline
   Group  & Index \\\hline
   $P_a=[q^{22}]:(L_2(q^2)\times (q^2-1))$&$(q^{12}+1)(q^6+1)(q^4+1)$\\
   $P_b=[q^{20}]:(Sz(q^2)\times (q^2-1))$&$(q^{12}+1)(q^6+1)(q^2+1)$\\
   $3.U_3(q^2):2$&$q^{18}(q^{12}+1)(q^4+1)(q^2-1)/2$\\
   $(\Z_{q^2+1}\times \Z_{q^2+1}):GL_2(3)$&$q^{24}(q^4+1)^2(q^2-1)^2.\Phi_{12}\Phi_{24}/(3.2^4)$\\
   $(\Z_{q^2-\sqrt{2}q+1}\times \Z_{q^2-\sqrt{2}q+1}):[96]$&$q^{24}(q^4-1)^2.u_2^2.\Phi_{12}\Phi_{24}/(3.2^5)$\\
   $(\Z_{q^2+\sqrt{2}q+1}\times \Z_{q^2+\sqrt{2}q+1}):[96]$&$q^{24}(q^4-1)^2.u_1^2.\Phi_{12}\Phi_{24}/(3.2^5)$\\
   $\Z_{q^4-\sqrt{2}q^3+q^2-\sqrt{2}q+1}:12$&$q^{24}(q^8-1)^2w_2.\Phi_{12}/(3.2^2)$\\
   $\Z_{q^4+\sqrt{2}q^3+q^2+\sqrt{2}q+1}:12$&$q^{24}(q^8-1)^2w_1.\Phi_{12}/(3.2^2)$\\
   $PGU_3(q^2):2$&$q^{18}(q^4+1)(q^2-1)\Phi_{24}/2$\\
   $Sz(q^2)\wr 2$&$q^{16}(q^6+1)(q^2+1)\Phi_{24}/2$\\
   $Sz(q^2):2$&$q^{20}(q^8-1)(q^6+1)\Phi_{24}/2$\\
   ${}^2F_4(q_0^2),q^2=q_0^{2\alpha},\alpha \mbox{ prime}$&$\frac{q_0^{24\alpha}(q_0^{12\alpha}+1)
   (q_0^{8\alpha}-1)(q_0^{6\alpha}+1)(q_0^{2\alpha}-1)}
   {q_0^{24}(q_0^{12}+1)(q_0^8-1)(q_0^6+1)(q_0^2-1)}$\\
  \hline
   \end{tabular}
 \end{center}
\end{table}

\subsection{Verifying Step $4$} Show $M=1.$ We have shown that $G'/M\cong
{}^2F_4(q^2)$ and for any $\theta\in \textrm{Irr}(M),$ if $\theta(1)=1,$ then $\theta$ is
$G'$-invariant so that by Lemma \ref{lem7}, $|M:M'|$ divides the order of the Schur multiplier of
${}^2F_4(q^2).$ As the Schur multiplier of ${}^2F_4(q^2),q^2\geq 8,$ is trivial, we deduce that
$M=M'.$ If $M$ is abelian then we are done. Assume that $M$ is nonabelian. Let $N\leq M$ be a
normal subgroup of $G'$ such that $M/N$ is a chief factor of $G'.$ It follows that $M/N\cong S^k,$
for some nonabelian simple group $S.$ By Lemmas \ref{lem5} and \ref{lem6}, $S$ possesses a
nontrivial irreducible character $\varphi$ such that $\varphi^k\in \textrm{Irr}(M/N)$ which extends
to $G'/N.$ Gallagher's Theorem yields $\varphi(1)^k\tau(1)\in {\rm{cd}}(G'/N)\subseteq
\textrm{cd}(G')$ for any $\tau\in \textrm{Irr}(G'/M)\subseteq \textrm{Irr}(G'/N) .$ Since
${\rm{cd}}(G'/M)={\rm{cd}}(G)$ and $\varphi(1)>1,$ if we choose $\tau$ to be the Steinberg
character of $G'/M,$ then $\varphi(1)^k\tau(1)$ cannot divide any degree of $G,$ a contradiction.
Thus $M=1.$

\subsection{Verifying Step $5$} $G=G'\times C_G(G').$ It follows from Step
$4$ that $G'\cong {}^2F_4(q^2)$ is a nonabelian simple group. Let $C=C_G(G').$ Then $G/C$ is almost
simple with socle ${}^2F_4(q^2).$ Assume that $G'\times C<G.$ Then $G$ induces some outer
automorphism on $G'.$ Note that the only nontrivial outer automorphisms of ${}^2F_4(q^2)$ are field
automorphisms. Let $\sigma$ be a nontrivial outer automorphism of $G'.$ By \cite[Theorem C]{FS},
$\sigma$ does not fix some conjugacy class of $G',$ and so by \cite[Theorem $6.32$]{Isaacs}, the
action of $\sigma$ on the conjugacy classes of $G'$ is permutation isomorphic to the action of
$\sigma$ on $\textrm{Irr}(G'),$ so that $\sigma$ does not fix some nontrivial irreducible character
$\psi\in \textrm{Irr}(G').$ Let $\gamma\in \textrm{Irr}(G)$ be an irreducible constituent of
$\psi^{G}.$ As $\psi$ is not $\sigma$-invariant, we deduce that $\gamma(1)=z \psi(1),$ where $z>1$
and $z\mid |\textrm{Out}(G')|=2m+1.$ We have $\psi(1)>1,$ $z\psi(1)\in {\rm{cd}}(G)$ and
$\psi(1)\in {\rm{cd}}(G')={\rm{cd}}(G).$ Thus $\psi(1)$ and $z\psi(1)$ are in ${\rm{cd}}(G)$ with
$z>1$ being odd, so that by Lemma \ref{lem8}$(ix),$ we have that $z\geq q^2-1.$ But then as $z\mid
2m+1,$ we have $2m+1\geq z\geq 2^{2m+1}-1,$ which is impossible as $m\geq 1.$ Thus $G=G'\times C.$
It follows that $C\cong G/G'$ is abelian. The proof is now complete.

\subsection*{Acknowledgment} The author is grateful to Dr. Thomas Wakefield for many helpful discussions on Huppert's Conjecture.

\end{document}